\newcommand{\be}{\begin{enumerate}}
\newcommand{\ee}{\end{enumerate}}
\newcommand{\bi}{\begin{itemize}}
\newcommand{\ei}{\end{itemize}}
\newcommand{\beq}{\begin{equation}}
\newcommand{\eeq}{\end{equation}}
\newcommand{\ra}{\mbox{$\rightarrow$}}
\newcommand{\Leqa}{\mbox{$\Longleftrightarrow$}}
\newcommand{\tens}[3]{\mbox{${#1}\otimes_{#2}{#3}$}}
\newcommand{\lattices}[1]{\mathop{\text{lattices}({#1})}}
\newcommand{\rad}[1]{\mathop{\text{rad}({#1})}}
\newcommand{\Her}[1]{\mathop{\text{Her}({#1})}}
\newcommand{\Span}[2]{\mathop{\text{span}_{#1}({#2})}}
\newcommand{\Centr}[2]{\mathop{\text{C}_{#1}({#2})}}
\newcommand{\LC}{\mathop{\text{LC}}}
\newcommand{\End}[2]{\mathop{\text{End}_{#1}(#2)}}
\newcommand{\Aut}[2]{\mathop{\text{Aut}_{#1}(#2)}}
\newcommand{\diag}{\mathop{\text{diag}}}
\newcommand{\EMatr}[1]{\mathop{\text{I}_{#1}}}
\newcommand{\Matr}[2]{\mathop{\text{M}_{#1}({#2})}}
\newcommand{\Mint}[3]{\mathop{\text{M}_{#1,#2}(#3)}}
\newcommand{\pairs}{\mathop{\text{pairs}}}
\newcommand{\pD}{\mbox{$\textfrak{p}_D$}}
\newcommand{\pF}{\mbox{$\textfrak{p}_F$}}
\newcommand{\trans}[1]{{#1}^{\ensuremath{\mathsf{T}}}}
\DeclareMathOperator{\Gal}{Gal}
\DeclareMathOperator{\Latt}{Latt}
\DeclareMathOperator{\row}{row}
\DeclareMathOperator{\MopRow}{Row}
\DeclareMathOperator{\MopEnd}{End}
\DeclareMathOperator{\period}{period}
\DeclareMathOperator{\rank}{rank}
\newenvironment{example}{\text{\bf Example:}}{}
\newenvironment{proof}{\text{\bf Proof:} }{\text{q.e.d.}}
\newtheorem{theorem}{Theorem}
\newtheorem{proposition}{Proposition}
{\theorembodyfont{\rmfamily}
\newtheorem{notation}{Notation}
\newtheorem{definition}{Definition}
\newtheorem{definitionremark}{Definition/Remark}

\newtheorem{lemma}{Lemma}

\newtheorem{remark}{Remark}}
\title{Embeddings of local fields in simple algebras and simplicial structures on the Bruhat-Tits building}
\date{29.08.2008}
\author{Daniel Skodlerack}
\begin{document}
\maketitle
\section{Introduction and notation}
\subsection{First remark}
This article answers a question that naturally arises from the articles by M.Grabitz and P. Broussous (see \cite{broussousGrabitz:00}) and P. Broussous and B. Lemaire (see \cite{broussousLemaire:02}). 
For an Azumaya-Algebra $A$ over a non-archimedean local field $F,$ M. Grabitz and P. Broussous have introduced embedding invariants for field embeddings, that is for pairs $(E,\textfrak{a})$, where $E$ is a field extension of $F$ in $A$, and $\textfrak{a}$ is a hereditary order which is normalised by $E^{\times}.$ On the other hand if we take such a field extension $E$ and define $B$ to be the centralizer of $E$ in $A,$ then $G:=A^{\times}$ are $G_{E}:=B^{\times}$ are sets of rational points of reductive groups defined over $F$ and $E$ respectively. P. Broussous and B. Lemaire have defined a map $j_E:\ {\cal I}^{E^\times}\rightarrow {\cal I}_E$, where ${\cal I}$ is the g.r. (geometric realization) of the euclidean building of $G$, and ${\cal I}_E$ is the g.r. of the euclidean building of $G_{E}.$
The question which we address is to relate the embedding invariants to the behavior of the map $j_E$ with respect to the simplicial structures of ${\cal I}$ and ${\cal I}_E.$ I have to thank very much Prof. Zink from Homboldt University Berlin for his helpful remarks, the revision of the work and for giving my the interesting task.

\subsection{Notation}
\be
\item The set of natural numbers starts with 1 and the set of the first $r$ natural numbers is denoted by ${\mathbb{N}}_{r}.$ For the set of non-negative integers we use the symbol $\mathbb{N}_0.$
\item The letter $F$ denotes a non-archimedean local field.  For the valuation ring, the valuation ideal, the prime element and the residue field of $F$ we use the notation $o_F, \pF, \pi_F$ and $\kappa_F$ respectively. We use similar notation for other division algebras with non-archimedean valuation. 
\item The letter $\nu$ denotes the valuation on $F$ with $\nu(\pi_F)=\frac{1}{q},$ where $q$ is the cardinality of $\kappa_F.$
\item We assume $D$ to be a finite dimensional central division algebra over $F$ of index $d.$ 
\item We fix an $m$ dimensional right $D$ vector space $V$, $m\in\mathbb{N},$ and put $A:=\End{D}{V}.$ In particular $V$ is a left $\tens{A}{F}{D^{op}}$-module.
\item The letter $L$ denotes a maximal unramified field extension of $F$ in $D$ and we assume that $\pi_D$ normalizes $L,$ i.e. the map $\sigma(x):=\pi_Dx\pi_D^{-1},$ $x\in D,$ generates $\Gal(L|F)$. 
\item For a positive integer $f|d$ we denote by $L_f$ the subfield of degree $f$ over $F$ in $L.$
\ee

\section{Preliminaries}

\subsection{Vectors and Matrices up to cyclic permutation}\label{secVectorsMatrices}
\begin{remark}
All invariants which are considered in this aritcle are vectors or matrices modulo cyclic permutation. 
\end{remark}
{\bf Vectors:} We denote by $\MopRow(s,t)$ the set of all vectors $w\in\mathbb{N}_0^s$ whose sum of entries is $t,$ where $s$ and $t$ are natural numbers, i.e.
$$\sum_{i=1}^sw_i=t.$$
Two vectors $w,w'\in\MopRow(s,t)$ are called {\it equivalent} if $w$ can be obtained from $w'$ by cyclic permutation of the entries of $w,$ i.e.
$$w'=(w_k,\ldots,w_s,w_1,\ldots,w_{k-1})\text{ for a }k\in\mathbb{N}_s.$$
The equivalence class is denoted by $\langle w\rangle.$ Analogous we define the class for every vector, e.g. for a vector of pairs.
One can represent the class $\langle w\rangle$ of a vector $w\in\MopRow(s,t)$ by pairs 
$$\pairs(\langle w\rangle):=\langle (w_{i_0},i_1-i_0),(w_{i_1},i_2-i_1),\ldots,(w_{i_{k}},i_0+m'-1-i_k)\rangle ,$$
where $(w_{i_j})_{0\leq j\leq k}$ is the subsequence of the non-zero coordinates.   
Given the same $w$ with $\pairs(\langle w\rangle)=\langle (a_0,b_0),\ldots,(a_k,b_k)\rangle $
we define the {\it complement of} $\langle w\rangle$, denoted by $\langle w\rangle^c$ to be the class $\langle w\rangle$,
such that $$\pairs(\langle w\rangle)=\langle (b_0,a_1),(b_1,a_2),(b_2,a_3),\ldots,(b_k,a_0)\rangle .$$ 
This is a bijection $$(\ )^c:\ \MopRow(s,t)\ra \MopRow(t,s).$$ \vspace{3mm} \\
{\bf Matrices:} Given three natural numbers $s,r,t$ the symbol $\Mint{r}{s}{t}$ denotes the set of matrices with $r$ rows and $s$ columns such that all entries are non-negative integers and the sum of them is $t.$ 
For a matrix $M=(m_{i,j})\in\Mint{r}{s}{t},$ we define the vector $\row(M)\in\MopRow(r+s,t)$ to be 
$$(m_{1,1},m_{1,2},\ldots,m_{1,s},m_{2,1},\ldots,m_{2,s},\ldots,m_{r,s}).$$
Two matrices $M,N\in\Mint{r}{s}{t}$ are said to be {\it equivalent} if $\row(M)$ and 
$\row(N)$ are. The equivalence class is denoted by $\langle M\rangle .$

\begin{example}
\[\left(\begin{array}{cc}2&0\\1&3\\0&1\end{array}\right)\backsim\left(\begin{array}{cc}1&2\\0&1\\3&0\end{array}\right) 
\]
\end{example}

\subsection{Hereditary orders, lattice chains and lattice functions}
In this section we give a description of the euclidean building of $A$ in terms of lattice functions and its simplicial 
structure in terms of hereditary orders. As references we recommand \cite{reiner:03} for hereditary orders , \cite{brown:89} for the definition of an euclidean building   and \cite{broussousLemaire:02} for the discription of the euclidean building of $GL_m(D)$ in terms of lattice functions and norms. One can find the description with norms in \cite{bruhatTitsIII:84} as well. For more details see also \cite{skodlerack:05}.  

\begin{definition}
Let $D'$ be a central division algebra of finite index $d'$ over a non-archimedian local field $F',$ and let $W$ be a $D'$-vector space of finite dimension. A finitely generated $o_{D'}$-submodule $\Gamma$ of $W$ is called a {\it (full) $o_{D'}$ lattice of} $W$ if $\Span{D'}{\Gamma}=W.$ We omit the word full.
\end{definition}

\begin{definition}
A subring $\textfrak{a}$ of $A$, is called an $o_F$-{\it order of} $A$ if  $\textfrak{a}$ is an $o_F$-lattice of $A.$
We call an $o_F$-order $\textfrak{a}$ {\it hereditary} if  the Jacobson radical $\rad{\textfrak{a}}$ is a projective right-$\textfrak{a}$ module. The set of all hereditary orders is denoted by $\Her{A}.$
For $\textfrak{a}\in\Her{A}$ we denote by $\lattices{\textfrak{a}}$ the set of all $o_D$-lattices $\Gamma$ of $V$ such that $a\Gamma\subseteq\Gamma$ for all $a\in\textfrak{a}.$
\end{definition}

\begin{definitionremark}
\be
\item Let $R$ be a non-empty set, and take $r\in\mathbb{N}$. Given non-empty subsets $R_{i,j}$ of $R,$ $(i,j)\in{\mathbb{N}}_{r}^2,$ and natural numbers $n_1,\ldots,n_r$, we denote by $(R_{i,j})^{n_1,\ldots,n_r}$ the set of all block matrices in $M_{\sum_{i=1}^rn_i}(R)$, such that for all $(i,j)$ the \mbox{$(i,j)$}-block lies in $M_{n_i,n_j}(R_{i.j}).$ 
\item Given $r\in\mathbb{N},$ $\bar{n}=(n_1,\ldots,n_r)\in\mathbb{N}^r,$ we get a hereditary order $$\textfrak{a}^{\bar{n}}:=(R_{i,j})^{n_1,\ldots,n_r},\text{ where }$$  $$R_{i,j}:=\left\{\begin{array}{ll}o_D,&\text{if}\ j\leq i\\\pD,&\text{if}\ i<j\end{array}\right. .$$ 
\item A hereditary order of $M_{m}(D)$ of this form is called {\it in standard form.} The class $\langle \bar{n}\rangle $ is called the {\it invariant} and $r$ the {\it period} of $\textfrak{a}^{\bar{n}}.$
\ee
\end{definitionremark}
If we say that sets are conjugate to each other, we mean conjugate by an element of $A^\times.$ The proof is given in \cite{reiner:03}.

\begin{theorem}
We fix a $D$-basis of $V$ and identify $A$ with $M_m(D).$
\be
\item Two hereditary orders in standart form of $A$ are conjugate to each other if and only if they have the same invariant.
\item Every $\textfrak{a}\in\Her{A}$ is conjugate to a hereditary order in standard form. 
\ee
\end{theorem}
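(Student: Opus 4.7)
The plan is to pass from hereditary orders to lattice chains by the classical bijection $\textfrak{a}\leftrightarrow\lattices{\textfrak{a}}$ from \cite{reiner:03}: the set $\lattices{\textfrak{a}}$ is a totally ordered chain $\ldots\supsetneq\Gamma_0\supsetneq\Gamma_1\supsetneq\ldots$ of $o_D$-lattices of $V$, of some period $r$ in the sense that $\Gamma_{i+r}=\Gamma_i\pi_D$, and $\textfrak{a}$ is recovered as the stabilizer $\{a\in A\mid a\Gamma_i\subseteq\Gamma_i\text{ for all }i\}$. Under this bijection, conjugation of $\textfrak{a}$ by $g\in A^\times$ translates into the termwise action of the $D$-linear automorphism $g$ on the chain. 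To such a chain I attach the tuple $\bar n:=(n_1,\ldots,n_r)$ with $n_i:=\dim_{\kappa_D}(\Gamma_{i-1}/\Gamma_i)$; since $V=\Gamma_0$ has $D$-dimension $m$ and $\Gamma_r=\Gamma_0\pi_D$, one has $\bar n\in\MopRow(r,m)$. The indexing is canonical only up to a shift of origin, so only the class $\langle\bar n\rangle$ is intrinsic to $\textfrak{a}$.

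For (ii), given $\textfrak{a}$ I would construct a $D$-basis of $V$ adapted to the chain: choose $n_i$ vectors in $\Gamma_{i-1}$ whose residues span $\Gamma_{i-1}/\Gamma_i$ as a $\kappa_D$-vector space, for $i=1,\ldots,r$, and list them in block order. Identifying $V$ with $D^m$ in this basis, the $(i,j)$-block of any endomorphism in $\textfrak{a}$ must send basis vectors of type $j$ into $\Gamma_{i-1}$: for $j\leq i$ this is automatic and gives $o_D$-entries, while for $i<j$ it forces entries in $\pD$. This reproduces the block description of $\textfrak{a}^{\bar n}$ exactly.

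For (i), the ``only if'' direction is immediate: a conjugation carries the chain of $\textfrak{a}^{\bar n}$ to that of $\textfrak{a}^{\bar n'}$, a $D$-linear bijection preserving inclusions and the $\pi_D$-shift, so the sequence of $\kappa_D$-codimensions coincides up to cyclic shift, which is exactly $\langle\bar n\rangle=\langle\bar n'\rangle$. Conversely, if $\langle\bar n\rangle=\langle\bar n'\rangle$, a cyclic renumbering of one chain (allowed by the equivalence) matches the two sequences entry by entry, whereupon the two adapted bases are intertwined by an element of $\mathrm{GL}_m(D)=A^\times$ that realizes the conjugation.

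The main obstacle, and the genuine content, is the assertion that $\lattices{\textfrak{a}}$ is a totally ordered periodic chain. This rests on the projectivity of $\rad{\textfrak{a}}$ as a right $\textfrak{a}$-module and is the core of the structure theorem for hereditary orders; I would simply quote it from Reiner. Everything else is bookkeeping with block matrices and the cyclic-shift ambiguity in the origin of the chain.
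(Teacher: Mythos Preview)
Your proposal is correct and follows the standard argument via the lattice-chain description of hereditary orders. Note, however, that the paper does not supply its own proof of this theorem at all: the sentence immediately preceding the statement reads ``The proof is given in \cite{reiner:03}.'' So there is nothing to compare against beyond observing that your sketch is precisely the kind of argument one finds in Reiner --- passing to the associated lattice chain, reading off the successive $\kappa_D$-codimensions, and building an adapted basis. You have correctly located the only nontrivial input (that $\lattices{\textfrak{a}}$ is a periodic chain, equivalently the structure theorem for hereditary orders) and deferred it to the reference, exactly as the paper does.
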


By this theorem the notion of {\it invariant} and {\it period} carries over to every element of $\Her{A}$ and they do not depend on the choice of the basis. 

\begin{definition}
A sequence $(\Gamma_i)_{i\in\mathbb{Z}}$ of lattices of $V$ is called an {\it $o_F$-lattice chain in} $V$ if  
\be
\item for all integers $i,$ we have $\Gamma_{i+1}\subset\Gamma_{i},$ and
\item there exists a natural number $r$ such that for all integers $i$ we have $\Gamma_i\pi_D=\Gamma_{i+r}.$  
\ee
We call $r$ the {\it period of the lattice chain.} 
For a lattice chain $\Gamma$ we put
$$\lattices{\Gamma}:=\{\Gamma_i|\ i\in\mathbb{Z}\}.$$
Two lattice chains $\Gamma,\ \Gamma'$ are called {\it equivalent} if  $\lattices{\Gamma}$ and $\lattices{\Gamma'}$ are equal. We write $[\Gamma]$ for the equivalence class. We define an order by $[\Gamma]\leq [\Gamma']$ if  $\lattices{\Gamma}$ is a subset of $\lattices{\Gamma'}.$ The set of all lattice chains in $V$ is denoted by $\LC_{o_D}(V).$ 
\end{definition}

\begin{remark}
For every lattice chain $\Gamma$ in $V,$ the set 
$$\textfrak{a}_{\Gamma}:=\{a\in A|\ \forall i\in\mathbb{Z}:\ a\Gamma_i\subseteq\Gamma_i\}$$ 
is a hereditary order of $A.$
\end{remark}

\begin{theorem}\cite[(1.2.8)]{bushnellFroehlich:83}
$[\Gamma]\mapsto \textfrak{a}_{\Gamma}$ defines a bijection between the set of equivalence classes of lattice chains in $V$ and the set of hereditary orders of $A.$ We have:
$$[\Gamma]\leq[\Gamma']\ \Leqa\ \textfrak{a}_{\Gamma}\supseteq\textfrak{a}_{\Gamma'}$$
\end{theorem}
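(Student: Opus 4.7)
The plan is to construct an inverse map, then verify bijectivity and the order-reversing property step by step.

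\textbf{Well-definedness.} First I would observe that if $[\Gamma]=[\Gamma']$ then by definition $\lattices{\Gamma}=\lattices{\Gamma'}$, so the stabilizing conditions $a\Gamma_i\subseteq \Gamma_i$ for all $i$ coincide with $a\Gamma'_i\subseteq \Gamma'_i$ for all $i$. Hence $\textfrak{a}_\Gamma=\textfrak{a}_{\Gamma'}$, and the map $[\Gamma]\mapsto \textfrak{a}_\Gamma$ is well-defined.

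\textbf{Surjectivity.} I would invoke the preceding classification theorem: every $\textfrak{a}\in\Her{A}$ is conjugate to a standard form $\textfrak{a}^{\bar n}$. For a standard order it is straightforward to write down an explicit chain: decompose $V=\bigoplus_{i=1}^r V_i$ according to the block structure of $\bar n$, with $V_i\cong D^{n_i}$, and put $\Gamma_0:=\bigoplus o_D^{n_i}$, $\Gamma_k:=\pD\cdot(\bigoplus_{i\leq k} o_D^{n_i})\oplus \bigoplus_{i>k}o_D^{n_i}$ for $0\le k<r$, extended by $\Gamma_{i+r}:=\Gamma_i\pi_D$. A direct check shows $\textfrak{a}_\Gamma=\textfrak{a}^{\bar n}$. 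Conjugating back by an element of $A^\times$ gives a lattice chain whose stabilizer is the given $\textfrak{a}$.

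\textbf{Injectivity.} The decisive step is to show that for any lattice chain $\Gamma$ one has $\lattices{\Gamma}=\lattices{\textfrak{a}_\Gamma}$, i.e.\ the lattices in the chain are exactly the $o_D$-lattices stabilized by $\textfrak{a}_\Gamma$. The inclusion $\lattices{\Gamma}\subseteq\lattices{\textfrak{a}_\Gamma}$ is tautological. For the reverse, I would argue that any $o_D$-lattice $\Lambda\in\lattices{\textfrak{a}_\Gamma}$ satisfies $\rad{\textfrak{a}_\Gamma}\Lambda\subseteq\Lambda$; using that (by hereditariness and the standard-form reduction) $\rad{\textfrak{a}_\Gamma}$ shifts the chain by one step and contains a uniformizer, one deduces that $\Lambda$ lies between $\Gamma_i$ and $\Gamma_{i+1}$ for a suitable $i$ and is itself $\textfrak{a}_\Gamma$-stable. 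A block-decomposition argument in standard form (each block is a single lattice in $D^{n_i}$ and the only stable ones are $o_D^{n_i}$ and $\pD^{n_i}$) forces $\Lambda\in\lattices{\Gamma}$. Once $\lattices{\Gamma}=\lattices{\textfrak{a}_\Gamma}$ is established, $\textfrak{a}_\Gamma=\textfrak{a}_{\Gamma'}$ implies $\lattices{\Gamma}=\lattices{\Gamma'}$, i.e.\ $[\Gamma]=[\Gamma']$.

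\textbf{Order reversal.} If $[\Gamma]\leq[\Gamma']$, i.e.\ $\lattices{\Gamma}\subseteq\lattices{\Gamma'}$, then any element stabilizing all lattices of $\Gamma'$ stabilizes those of $\Gamma$, so $\textfrak{a}_{\Gamma'}\subseteq\textfrak{a}_\Gamma$. Conversely, $\textfrak{a}_{\Gamma'}\subseteq\textfrak{a}_\Gamma$ gives $\lattices{\textfrak{a}_\Gamma}\subseteq\lattices{\textfrak{a}_{\Gamma'}}$, and by the identification from the injectivity step this is exactly $\lattices{\Gamma}\subseteq\lattices{\Gamma'}$, hence $[\Gamma]\leq[\Gamma']$.

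The main obstacle is the injectivity step: showing that the lattices stabilized by $\textfrak{a}_\Gamma$ do not form a strictly larger set than $\lattices{\Gamma}$. This is where the hereditary structure of $\textfrak{a}_\Gamma$, and in particular the action of its Jacobson radical as a one-step shift of the chain, enters the argument essentially.
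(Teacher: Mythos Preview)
The paper does not prove this theorem; it simply cites \cite[(1.2.8)]{bushnellFroehlich:83} and moves on, so there is no argument in the paper to compare yours against.

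Your outline is the standard one and is sound in approach. Two remarks. First, you invoke the preceding structure theorem (conjugacy to a standard form $\textfrak{a}^{\bar n}$) for surjectivity; the paper also only cites that result, from \cite{reiner:03}, so within this paper's framework the dependence is harmless, but be aware that in Bushnell--Fr\"ohlich the deduction runs the other way --- hereditary orders are characterised \emph{via} lattice chains --- so in a self-contained treatment your argument would be circular. Second, in the injectivity step, which you rightly flag as the crux, the passage ``one deduces that $\Lambda$ lies between $\Gamma_i$ and $\Gamma_{i+1}$'' does not follow merely from $\rad{\textfrak{a}_\Gamma}$ shifting the chain. What is needed is that $\textfrak{a}_\Gamma/\rad{\textfrak{a}_\Gamma}\cong\prod_j M_{n_j}(\kappa_D)$, so the simple $\textfrak{a}_\Gamma$-modules are exactly the successive quotients $\Gamma_j/\Gamma_{j+1}$; then any $\textfrak{a}_\Gamma$-stable lattice $\Lambda$ has a composition series by these, forcing $\Lambda\in\lattices{\Gamma}$. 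With that ingredient your block argument goes through.
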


\begin{definition}
A family $(\Lambda(t))_{t\in\mathbb{R}}$ of $o_D$-lattices of $V$ is called an {\it ($o_D$)-lattice function of} $V$ if the following are satisfied for all $s\leq t:$
\be
\item $\Lambda(t)\subseteq\Lambda(s),$
\item $\bigcap_{r<t}\Lambda(r)=\Lambda(t),$ and
\item $\Lambda(t)\pi_D=\Lambda(t+\frac{1}{d}).$
\ee  
Two lattice functions $\Lambda$ and $\Lambda'$ are called {\it equivalent} if  there is an $s\in\mathbb{R}$ such that for all $t\in\mathbb{R},$  $\Lambda(t)=\Lambda'(t+s).$ We write $[\Lambda]$ for the equivalence class. 
We denote by $\Latt^1_{o_D}(V)$ the set of all $o_D$-lattice functions in $V,$ by $\Latt_{o_D}(V)$ the set of equivalence classes of elements of $\Latt^1_{o_D}(V)$ and by $\lattices{\Lambda}$ the image of $\Lambda,$ i.e.
$$\lattices{\Lambda}=\{\Lambda(t)|\ t\in\mathbb{R}\}.$$
\end{definition}

\begin{definition}\label{latticesquarelatticefunctions}
\bi
\item For every lattice function $\Lambda$ of $V$ we can define a family $t\mapsto\textfrak{a}_{\Lambda}(t):=\{a\in A|\ \forall s\in\mathbb{R}:\ a\Lambda(s)\subseteq\Lambda(s+t)\}.$ $\textfrak{a}_{\Lambda}$ is an $o_F$-lattice function in the $F$-vector space $A.$
We call the set $$\Latt^2_{o_F}(A):=\{\textfrak{a}_{\Lambda}|\ \Lambda\in\Latt^1_{o_F}(V)\}$$ the set of {\it square lattice functions in} $A.$
\item $[\Lambda]\mapsto\textfrak{a}_{\Lambda}$ defines a bijection between $\Latt_{o_D}(V)$ and $\Latt^2_{o_F}(A).$
\ei
\end{definition}

\section{Embedding types}
For a field extension $E|F$ we denote by $E_D|F$ the maximal field extension in $E|F,$ which is $F$-algebra isomorphic to a subfield of $L.$ Its degree is the greatest common divsor of $d$ and the residue degree of $E|F.$  
\begin{definition}
An {\it embedding} is a pair $(E,\textfrak{a})$ satisfying 
\be
\item $E$ is a field extension of $F$ in $A$,
\item $\textfrak{a}$ is a hereditary order in $A$, normalised by $E^{\times}.$ 
\ee
Two embeddings $(E,\textfrak{a})$ and $(E',\textfrak{a}')$ are said to be {\it equivalent} if 
there is an element $g\in A^{\times}$, such that $gE_Dg^{-1}=E'_D$ and $g\textfrak{a}g^{-1}=\textfrak{a}'$.
\end{definition}

\begin{remark}
In each equivalence class of embeddings there is a pair such that the field can
be embedded in $L$. 
\end{remark}

\begin{notation}
For $r,s,t\in\mathbb{N},$ $\Mint{r}{s}{t}$ denotes the set of $r\times s$-matrices
  with non-negative integer entries, such that
\bi
\item in every column there is an entry greater than zero, and
\item the sum of all entries is $t$.
\ei
\end{notation}

Until the end of this section we fix a $D$-basis of $V$ and identify $A$ with $M_m(D).$
\begin{definition}
Let $f|d$ and $r\leq m$.
A matrix with $f$ rows and $r$ columns is called an embedding datum if it belongs to $\Mint{f}{r}{m}$. Given an embedding datum $\lambda$, we define the pearl embedding as follows. 
The {\it pearl embedding} of $\lambda$ (with respect to the fixed basis) is the embedding $(E,\textfrak{a})$, with the
following conditions:
\be
\item $[E:F]=f$,
\item $E$ is the image of the field extension $L_f$ of degree $f$ of $F$ in $L$ via the monomorphism $x\in L_f\mapsto \diag(M_1(x),M_2(x),\ldots,M_r(x))$
where
$$M_j(x)=\diag(\sigma^0(x)\EMatr{\lambda_{1,j}},\sigma^1(x)\EMatr{\lambda_{2,j}},\ldots,\sigma^{f-1}(x)\EMatr{\lambda_{f,j}})$$
\item $\textfrak{a}$ is a hereditary order in standard form according to the partition $m=n_1+\ldots +n_r$ where $n_j:=\sum_{i=1}^f\lambda_{i,j}.$
\ee 
\end{definition}

\begin{theorem}\cite[2.3.3 and 2.3.10]{broussousGrabitz:00}\label{thmPearlEmb}
\be
\item Two pearl embeddings are equivalent if and only if the embedding
  data are equivalent.
\item In any class of embeddings lies a pearl embedding.
\ee 
\end{theorem}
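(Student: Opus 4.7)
The plan is to prove both assertions together by attaching to every embedding $(E,\textfrak{a})$ a canonical $\MopRow$-equivalence class in $\Mint{f}{r}{m}$ that agrees with $\langle\lambda\rangle$ whenever $(E,\textfrak{a})$ is the pearl embedding of $\lambda$, and then checking that this invariant is complete: two embeddings are equivalent iff their invariants coincide.

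By the preceding remark I may assume $E\subseteq L$, so $E=L_f$ with $f\mid d$. Let $\Gamma_0\supsetneq\Gamma_1\supsetneq\cdots\supsetneq\Gamma_{r-1}\supsetneq\Gamma_0\pi_D$ be the lattice chain attached to $\textfrak{a}$. Since $E^\times$ normalises $\textfrak{a}$, the maximal compact subgroup $o_E^\times$ lies in $\textfrak{a}^\times$ and preserves each $\Gamma_j$; hence each graded piece $\Gamma_{j-1}/\Gamma_j$ carries commuting actions of $\kappa_D$ (via $o_D$) and $\kappa_E$ (via $o_E$). Because $\kappa_E\subseteq\kappa_L\subseteq\kappa_D$, it decomposes into $f$ isotypic components indexed by $\Gal(\kappa_E/\kappa_F)=\langle\sigma|_{\kappa_E}\rangle$; after fixing a reference embedding $\kappa_E\hookrightarrow\kappa_D$, I set $\lambda_{i,j}$ equal to the $\kappa_D$-dimension of the component on which $\kappa_E$ acts by $\sigma^{i-1}$. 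The resulting matrix lies in $\Mint{f}{r}{m}$ (every column is nonzero and the entries sum to $m$).

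Next I would analyse the ambiguity of this construction. Two choices enter: the index of the starting lattice in the chain, a $\mathbb{Z}/r$-shift on the columns of $\lambda$, and the reference embedding $\kappa_E\hookrightarrow\kappa_D$, a $\mathbb{Z}/f$-shift on the rows. They become coupled through conjugation with a uniformiser $\pi_{\textfrak{a}}$ of $\textfrak{a}$: one such conjugation produces a cyclic column shift of $\lambda$, and since $\pi_{\textfrak{a}}^r=\pi_D\cdot u$ for some $u\in\textfrak{a}^\times$, after $r$ iterations the reference embedding of $\kappa_E$ is additionally twisted by $\sigma$, effecting a row shift. A direct bookkeeping argument shows that the generated cyclic group of order $rf$ acts on the entries of $\lambda$ by single-step shifts of the reading-order listing, which is exactly the $\MopRow$-equivalence on $\Mint{f}{r}{m}$.

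A direct block-matrix calculation on the pearl embedding of any $\lambda$ confirms that the above recipe returns $\langle\lambda\rangle$. Conversely, given an arbitrary embedding $(E,\textfrak{a})$ with invariant $\langle\lambda\rangle$, I would split $V$ along the $o_E$-isotypic decomposition to obtain a $D$-basis diagonalising $E$ as in the pearl construction, then apply Theorem~1 to conjugate $\textfrak{a}$ simultaneously into standard form with partition $n_j:=\sum_i\lambda_{i,j}$. The resulting $A^\times$-element realises an equivalence between $(E,\textfrak{a})$ and the pearl embedding of $\lambda$, which proves (2); combined with the preceding paragraph it yields both directions of (1). The main obstacle I anticipate is the coupled cyclic-ambiguity step: identifying the cyclic group generated by $\pi_{\textfrak{a}}$-conjugation with the $\MopRow$-equivalence requires the identity $\pi_{\textfrak{a}}^r=\pi_D\cdot u$ and careful tracking of how the lattice-chain indexing interacts with the Galois indexing of $\kappa_E$-isotypic components; a secondary technical point is ensuring that in the proof of (2) a single basis simultaneously diagonalises $E$ and standardises $\textfrak{a}$.
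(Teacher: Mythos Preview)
The paper does not give its own proof of this theorem: it is quoted verbatim from \cite{broussousGrabitz:00} (their 2.3.3 and 2.3.10) and used as a black box, so there is no in-paper argument to compare your proposal against.

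That said, your sketch is essentially the Broussous--Grabitz argument. The invariant is indeed obtained from the $(\kappa_E,\kappa_D)$-bimodule structure of the successive quotients $\Gamma_{j-1}/\Gamma_j$, and one checks directly on a pearl embedding that this returns $\langle\lambda\rangle$. For part~(2) the simultaneous-basis issue you flag is handled exactly as you suggest: first split the lattice chain by a frame, then observe that each lattice, being $o_E$-stable, already decomposes along the $E$-isotypic pieces of $V$, so the splitting frame can be chosen compatibly with that decomposition; a final reordering and rescaling by diagonal powers of $\pi_D$ then puts $(E,\textfrak{a})$ in pearl form. One small refinement to your ambiguity analysis: once $E\subseteq L\subseteq D$ is fixed, the reference embedding $\kappa_E\hookrightarrow\kappa_D$ is canonical, so there is really only \emph{one} source of ambiguity, the choice of starting lattice $\Gamma_0$ in the $\mathbb{Z}$-indexed chain. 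Shifting by one step is a reading-order shift of $\row(\lambda)$ by one, and shifting by $r$ (i.e.\ right multiplication by $\pi_D$) twists the isotypic index by $\sigma$ because $\pi_D x\pi_D^{-1}=\sigma(x)$; since $\pi_D^f$ centralises $L_f$, the resulting cyclic group has order exactly $rf$, which is the $\MopRow$-equivalence. Framing it as two coupled shifts is harmless but slightly obscures that the group is cyclic rather than a product.
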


\begin{definition}
Let $(E,\textfrak{a})$ be an embedding. By the theorem it is equivalent to a
pearl-embedding. The class of the corresponding matrix $(\lambda_{i,j})_{i,j}$
is called the {\it embedding type} of $(E,\textfrak{a}).$ This definition does not depend on the choice of the basis by the proposition of Skolem-Noether.  
\end{definition}

\section{The euclidean building of $GL_m(D)$}
\subsection{Definitions}
Here we give the basic definitions, to state precisely the description of the euclidean building of $A^\times$ with lattice chains.
Basic definitions of the notions of simplicial complex and chamber coplex are given in \cite[Ch. I App.]{brown:89}.
For the definition of a Coxeter complex see \cite[Thm. III 4.b]{brown:89}.

\begin{definition}
A {\it building} is a triple $(\Omega,{\cal A},\leq)$, such that
$(\Omega,\leq)$ is a simplicial complex and ${\cal A}$ is a set of subcomplexes of $(\Omega,\leq)$ which cover $\Omega,$ i.e.
$$\bigcup {\cal A} =\Omega,$$
(The elements of ${\cal A}$ are called {\it apartments.}) statisfying the following \grqq Building Axioms\grqq:
\bi
\item {\bf B0} Every element of ${\cal A}$ is a Coxeter complex.
\item {\bf B1} For {\it faces} (also called simplicies), i.e. elements, $S_1$ and $S_2$ of $\Omega$ there is an apartment $\Sigma$ containing them. 
\item {\bf B2} If $\Sigma$ and $\Sigma'$ are two arpartments containing $S_1$ and $S_2$ then there is a poset isomorphism from $\Sigma$ to $\Sigma'$ which fixes $\bar{S}_1$ and $\bar{S}_2$ where $\bar{S}$ for a face $S$ is defined to be the set of all faces $T\leq S.$ 
\ei
A building is said to be {\it thick} if every codimension 1 face is attached to at least three chambers. 
\end{definition}

\begin{remark}
The buildings considered in this article are thick. 
\end{remark}

A euclidean Coxeter comlex is a Coxeter complex $(\Sigma,\leq)$ 
which is poset-isomorphic to simplicial complex $\Sigma(W,V)$ defined by 
an essential irreducible infinite affine reflection group $(W,V).$
 
For a face $S$ of a simplicial complex $(\Omega,\leq)$ the set of all formal sums
$\Sigma_{v\leq S,rk(v)=1}\lambda_vv$ with positive real coefficients such that $\Sigma_{v\leq S,rk(v)=1}\lambda_v=1$ is denoted by $|S|.$ The set 
$$|\Omega|:=\bigcup_{S\in\Omega}|S|$$ 
is called the geometric realisation (g.r.) of $\Omega.$
A {\it morphism} of simplicial complexes from $(\Omega,\leq)$ to $(\Omega',\leq')$ is a
map $f:(\Omega,\leq)\ra (\Omega',\leq'),$ such that for every face $S\in\Omega$ the restriction 
$f:\ \bar{S}\ra \bar{f(S)}$ is a poset isomorphism. In \cite{brown:89} the notion of non-degenerate simplicial map is used instead of morphism. A morphism $f$ induces a map $|f|$ between the g.r., by 
$$|f|(\sum_v\lambda_vv):=\sum_v\lambda_vf(v).$$
\begin{definition}
Given two buildings $(\Omega,{\cal A},\leq)$ and $(\Omega',{\cal A}',\leq')$ a {\it morphism}
from the first to the latter is a morphism of simplicial complexes such that the image of an apartment of ${\cal A}$ is contained in an apartment of ${\cal A}'.$
\end{definition}

As descriped in \cite{brown:89} VI.3 there is a canonical way to define a metric, up to a scalar, on the g.r. of an euclidean building by pulling back the metric from an affine reflection group to the apartment and this defines a canonical affine structure on the g.r. of the building.
The map $|\phi|$ between the g.r. of two euclidean buildings induced by an isomorphism $\phi$ is affine. 

\subsection{The description with lattice functions}

We describe the euclidean Bruhat-Tits-building $\Omega$ of $\Aut{D}{V}$ in terms of lattice chains, and the g.r. ${\cal I}$ in terms of lattice functions as it is done in $\cite{broussousLemaire:02},$ section I.3.

\begin{proposition}
\be
\item The posets $(\LC_{o_D}(V),\leq)$ and $(\Her{A},\supseteq)$ are simplicial complexes  of rank $m$ They are isomorphic via $\Psi([\Gamma]):=\textfrak{a}_{\Gamma}$ as simplicial complexes. 
\item A hereditary order is a vertex (resp. a chamber) if and only if its period is 1 (resp. m).
\ee
\end{proposition}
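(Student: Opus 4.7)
The plan is to use the Bushnell--Fröhlich bijection $[\Gamma]\mapsto\textfrak{a}_\Gamma$ recorded in the previous theorem to transport the assertion to the poset $(\Her{A},\supseteq)$, and then to describe the face ideal of a given hereditary order combinatorially in terms of its lattice chain. Since the bijection is order-reversing, it will automatically yield an isomorphism of posets once both sides have been shown to carry the structure of a simplicial complex of the same rank; so it is enough to exhibit the simplicial complex structure on $(\LC_{o_D}(V),\leq)$, where everything can be read off from the set $\lattices{\Gamma}$.

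Fix a class $[\Gamma]$ with period $r$ and invariant $\langle(n_1,\ldots,n_r)\rangle$. The sub-chains $[\Gamma']\leq[\Gamma]$ are precisely the equivalence classes of chains whose underlying lattice set is a non-empty subset of $\lattices{\Gamma}$ that is stable under multiplication by $\pi_D$. Since $\pi_D$ acts freely on $\lattices{\Gamma}$ with exactly $r$ orbits, this sets up an order isomorphism between $\overline{[\Gamma]}$ and the poset of non-empty subsets of an $r$-element set; the singleton subsets correspond to the period $1$ sub-chains, i.e.\ to the vertices of $\overline{[\Gamma]}$. This identifies $\overline{[\Gamma]}$ with the face poset of an $(r-1)$-simplex and thereby gives $(\LC_{o_D}(V),\leq)$ the structure of a simplicial complex.

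To obtain the rank statement I would invoke the standard form theorem: the invariant satisfies $n_i\geq 1$ and $\sum_i n_i=m$, so $r\leq m$, with equality exactly when $\bar n=(1,\ldots,1)$. Hence the rank of the complex equals $m$. Assertion (2) is then immediate: a hereditary order is a vertex iff its face ideal has rank $1$ iff its period is $1$, and it is a chamber iff its face ideal has the maximal possible rank $m$ iff its period equals $m$. The translation back to $(\Her{A},\supseteq)$ via $\Psi$ preserves both statements because $\Psi$ is an order-reversing bijection, and in particular sends chambers of the lattice-chain poset to the $\subseteq$-minimal (Iwahori) hereditary orders and vertices to the maximal ones.

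The only substantive point, which I expect to be the main technical hurdle, is the clean identification of sub-chains with $\pi_D$-stable subsets together with a verification that this bijection is order-preserving and that every non-empty such subset really does give a lattice chain (i.e.\ that periodicity is inherited and that nothing forces extra lattices back in). Once this bookkeeping with the $\pi_D$-action on $\lattices{\Gamma}$ is written down carefully the rest is formal.
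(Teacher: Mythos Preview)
Your proposal is correct. The paper itself does not supply a proof for this proposition; it is stated as a standard fact, with the surrounding discussion deferring to Reiner and to Bushnell--Fr\"ohlich for the underlying theory of hereditary orders and lattice chains. Your argument---passing to the lattice-chain side via the bijection of the preceding theorem, identifying the faces below $[\Gamma]$ with the nonempty $\pi_D$-stable subsets of $\lattices{\Gamma}$, and reading off the rank from the constraint $\sum_i n_i=m$ on the invariant---is exactly how one would fill in the details. The bookkeeping point you flag is routine: the $\pi_D$-action on $\lattices{\Gamma}$ is free with orbit set of size $r$, and any nonempty union of orbits, ordered by inclusion and reindexed by $\mathbb{Z}$, is again a lattice chain; conversely any sub-chain must be such a union by the periodicity axiom.
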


\begin{definition}
A {\it frame of} $V$ is a set of lines $v_1D,\ldots,v_mD$, where $v_i, i\in\mathbb{N}_m,$ is a $D$-basis of $V.$ If $\textfrak{R}$ is a frame we say that a lattice $\Gamma$ {\it is split by}\/ $\textfrak{R}$ if  $$\Gamma=\bigoplus_{W\in\textfrak{R}}(\Gamma\cap W).$$ 
A lattice chain $\Gamma$, lattice function $\Lambda,$ hereditary order $\textfrak{a}$ is {\it split by}\/ $\textfrak{R}$ if  every element of $\lattices{\Gamma}$, $\lattices{\Lambda},$ $\lattices{\textfrak{a}}$ resp. is split by $\textfrak{R}$. An equivalence class is {\it split by}\/ $\textfrak{R}$ if  every element of the equivalence class is split by $\textfrak{R}.$ The set of these classes split by $\textfrak{R}$ is called the {\it apartment corresponding to \textfrak{R}} and is denoted by ${\LC}_{o_D}(V)_{\textfrak{R}},$  $\Her{A}_{\textfrak{R}},$ $\Latt_{o_D}(V)_{\textfrak{R}}$ resp.. For the set of these apartments we write
$$\textfrak{A}({\LC}_{o_D}(V)),\ \textfrak{A}(\Her{A})\ \&\  \textfrak{A}(\Latt_{o_D}(V)).$$
\end{definition}

\begin{definition}
The left action of $A^{\times}$ on the set of $o_D$-lattices of $V,$ i.e.
$$g.\Gamma:=\{g\gamma|\ \gamma\in\Gamma\},$$
defines an $A^{\times}$-action on ${\LC}_{o_D}(V),\ \Latt_{o_D}(V)$ and $\Her{A}.$ 
\end{definition}

\begin{proposition}
\be
\item The two triple $$({\LC}_{o_D}(V),\textfrak{A}({\LC}_{o_D}(V)),\leq)\ \&\  (\Her{A},\textfrak{A}(\Her{A}),\supseteq)$$ are isomorphic euclidean buildings via $\Psi.$
\item $\Psi$ is $A^{\times}$-equivariant.
\item For every frame $\textfrak{R}$ the image of  ${\LC}_{o_D}(V)_{\textfrak{R}}$ under $\Psi$ is $\Her{A}_{\textfrak{R}}.$
\ee
\end{proposition}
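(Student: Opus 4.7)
The plan is to leverage the simplicial isomorphism $\Psi$ of the previous proposition and then verify (2), (3), and (1) in that order, using (3) to match apartments and (1) to promote the simplicial isomorphism to a full building isomorphism.

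For (2), the check is a direct unravelling of definitions: for $g\in A^\times$ and a lattice chain $\Gamma$,
\[
a\in\textfrak{a}_{g.\Gamma}\ \Leqa\ \forall i\in\mathbb{Z}:\ ag\Gamma_i\subseteq g\Gamma_i\ \Leqa\ g^{-1}ag\in\textfrak{a}_\Gamma,
\]
hence $\textfrak{a}_{g.\Gamma}=g\textfrak{a}_\Gamma g^{-1}$, and passing to equivalence classes gives $\Psi(g.[\Gamma])=g\Psi([\Gamma])g^{-1}$.

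For (3), I would use the identity $\lattices{\textfrak{a}_\Gamma}=\lattices{\Gamma}$: every $\Gamma_i$ is obviously $\textfrak{a}_\Gamma$-stable, while the converse inclusion is the content of the Bushnell--Fr\"ohlich bijection cited above. Since splitting by a frame $\textfrak{R}$ is a condition on the individual lattices, $\Gamma$ is split by $\textfrak{R}$ exactly when every lattice of $\lattices{\textfrak{a}_\Gamma}$ is, i.e.\ when $\textfrak{a}_\Gamma$ is. Hence $\Psi$ sends $\LC_{o_D}(V)_{\textfrak{R}}$ bijectively onto $\Her{A}_{\textfrak{R}}$, and in particular maps the apartment system $\textfrak{A}(\LC_{o_D}(V))$ onto $\textfrak{A}(\Her{A})$.

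For (1), the previous proposition together with (3) already yields a simplicial isomorphism carrying apartments to apartments, so it remains to verify the building axioms on one side; I would work with lattice chains. Fix a frame $\textfrak{R}=\{v_1D,\ldots,v_mD\}$; every lattice split by $\textfrak{R}$ has the form $\bigoplus_{i=1}^{m}v_i\pi_D^{a_i}o_D$ for a unique $(a_i)\in\mathbb{Z}^m$, and modulo the diagonal shift $(a_i)\mapsto(a_i+1)$ (which corresponds to right multiplication by $\pi_D$, i.e.\ to chain equivalence) the apartment $\LC_{o_D}(V)_{\textfrak{R}}$ is poset-isomorphic to the standard Coxeter complex of the affine Weyl group of type $\tilde{A}_{m-1}$, giving B0. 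For B1, two lattice chains involve only finitely many lattices of $V$, and the elementary divisor theorem over the noncommutative principal ideal domain $o_D$ produces a common splitting $D$-basis of $V$. For B2, $A^\times$ acts transitively on apartments, and the pointwise stabiliser of a pair of faces acts transitively on the apartments containing them, so an element of $A^\times$ stabilising $\bar{S}_1\cup\bar{S}_2$ and sending $\Sigma$ to $\Sigma'$ provides the required poset isomorphism.

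The principal obstacle is the coupled verification of B1 and B2: the existence of a common splitting frame for two prescribed simplices is a simultaneous diagonalisation statement for $o_D$-lattices, and the construction of the apartment isomorphism in B2 requires transitivity of the stabiliser of a given face on the apartments through it. Both rest on the structure of $o_D$ as a noncommutative principal ideal domain. Once these are in hand, B0 and the identification with $\Sigma(\tilde A_{m-1})$ are immediate from the $\mathbb{Z}^m$-parametrisation.
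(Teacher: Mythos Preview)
The paper does not give its own proof of this proposition; it simply refers the reader to Reiner's book with the single line ``For the proof see for example \cite{reiner:03}.'' So there is nothing to compare your argument against on the paper's side.

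That said, your sketch is correct and is essentially the standard verification one finds in such references: the equivariance in (2) is immediate from the definitions; (3) follows from the identification $\lattices{\textfrak{a}_\Gamma}=\lattices{\Gamma}$ already quoted from Bushnell--Fr\"ohlich; and for (1) the identification of an apartment with the affine Coxeter complex of type $\tilde A_{m-1}$ together with the elementary divisor theorem over $o_D$ (for B1) and the transitivity properties of $A^\times$ (for B2) is exactly the classical route. The one point where you should be a bit more explicit is B2: rather than invoking transitivity of stabilisers abstractly, note that any two apartments containing $S_1$ and $S_2$ arise from two frames both splitting the finitely many lattices involved, and an element of $A^\times$ carrying one frame to the other while fixing those lattices can be produced directly from the elementary divisor argument you already used for B1.
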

For the proof see for example $\cite{reiner:03}.$

\begin{remark}
Every $\textfrak{a}\in\Her{A}$ has a rank as a face in the chamber complex $(\Her{A},\supseteq),$ and we have $\period(\textfrak{a})=\rank(\textfrak{a}).$ This rank is not the $o_F$-rank of $\textfrak{a}$.
\end{remark}

The barycenter of two points in $\Latt_{o_D}(V)$ is defined as follows. 
Take $[\Lambda],[\Lambda']\in\Latt{o_D}{V}.$ Then there exists a 
frame $\textfrak{R}=\{v_1D,\ldots,v_mD\}$ which splits both. We can write
$$\Lambda(s)=\bigoplus_kv_k\textfrak{p}_D^{[(s+\alpha_k)d]+}$$ and
$$\Lambda'(s)=\bigoplus_kv_k\textfrak{p}_D^{[(s+\alpha'_k)d]+}$$ for some real 
vectors $\alpha,\alpha'$. Take $\beta\in\mathbb{R}$, and put 
$$\beta[\Lambda]+(1-\beta)[\Lambda']:=[\Lambda''],$$ where 
$$\Lambda''(s):=\bigoplus_kv_k\textfrak{p}_D^{[(s+\alpha''_k)d]+}$$ with 
$$\alpha'':=\beta\alpha+(1-\beta)\alpha'.$$ 

Now the next propositions explain why one can replace $\Omega$ by the building of classes of lattice functions and ${\cal I}$ by $\Latt_{o_D}V.$

\begin{proposition}[\cite{broussousLemaire:02} I. 2.4 and II. 1.1. for $F=E$]\label{propaffinebij}
There is a unique $A^\times$ affine bijection from ${\cal I}$ to $\Latt_{o_D}V.$ 
\end{proposition}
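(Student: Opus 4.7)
The plan is to build an explicit $A^\times$-equivariant affine bijection $\Phi:\Latt_{o_D}(V)\to\mathcal{I}$ and then prove both existence and uniqueness.

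First I would construct $\Phi$ as follows. Given $[\Lambda]$, the image $\lattices{\Lambda}$ is a lattice chain, which by the theorem cited from Bushnell--Fröhlich corresponds to a hereditary order $\mathfrak{a}=\mathfrak{a}_{\lattices{\Lambda}}$ of some period $r$. The simplex of $\mathfrak{a}$ in $\mathcal{I}$ has $r$ vertices $\mathfrak{a}_1,\ldots,\mathfrak{a}_r$ (the maximal orders containing $\mathfrak{a}$), in bijection with the $r$ lattices in one fundamental domain of the $\pi_D$-action on $\lattices{\Lambda}$. Within the interval $[t,t+\tfrac{1}{d})$ the lattice function $\Lambda$ is locally constant with value in $\{\Gamma_1,\ldots,\Gamma_r\}$, so one gets $r$ non-negative real numbers $\ell_i$ (the lengths of the level sets) summing to $\tfrac{1}{d}$. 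Set $\Phi([\Lambda]):=\sum_{i=1}^r d\,\ell_i\,\mathfrak{a}_i\in|\mathfrak{a}|\subseteq\mathcal{I}$. Independence of the representative $\Lambda$ in its class, and of the choice of starting point $t$, comes for free because a shift only cyclically permutes the labeling $(\mathfrak{a}_i,\ell_i)$, and the formal convex combination is unaffected by the simultaneous relabeling.

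Second, $A^\times$-equivariance is immediate: the action $g.\Lambda(s)=g\Lambda(s)$ sends $\lattices{\Lambda}$ to $g.\lattices{\Lambda}$, hence $\mathfrak{a}_{\lattices{\Lambda}}$ to $g\mathfrak{a}_{\lattices{\Lambda}}g^{-1}$, and by Proposition (on $\Psi$ being $A^\times$-equivariant) this matches with the action on $\mathcal{I}$; the length vector is unchanged. To see $\Phi$ is an affine bijection, I would restrict to one apartment: fix a frame $\mathfrak{R}=\{v_1D,\ldots,v_mD\}$. By the formula $\Lambda(s)=\bigoplus_kv_k\mathfrak{p}_D^{[(s+\alpha_k)d]^+}$, the set $\Latt_{o_D}(V)_\mathfrak{R}$ is in bijection with $\mathbb{R}^m/\mathbb{R}\cdot\mathbf{1}$ via the class of $\alpha=(\alpha_1,\ldots,\alpha_m)$, and the walls $\alpha_i-\alpha_j\in\mathbb{Z}$ cut out exactly the simplicial structure inherited from $\Her{A}_\mathfrak{R}$ because integer shifts of coordinates correspond to crossing the boundary between two adjacent maximal orders. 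This identifies $\Phi$ on $\Latt_{o_D}(V)_\mathfrak{R}$ with the canonical affine parametrization of the standard apartment of $\mathcal{I}$ of type $\tilde A_{m-1}$, from which affinity and bijectivity on each apartment, and hence globally, follow.

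Finally, for uniqueness, suppose $\Phi'$ is another such bijection; set $\Theta:=\Phi'\circ\Phi^{-1}$, an $A^\times$-equivariant affine bijection $\mathcal{I}\to\mathcal{I}$. By $A^\times$-transitivity on chambers it suffices to show $\Theta$ fixes one chamber pointwise. The stabilizer in $A^\times$ of a chamber (an Iwahori-type subgroup) permutes its vertices via a group that already acts transitively on chambers of any given apartment through a chamber; combined with the known fact that the vertex set of $\mathcal{I}$ is a single $A^\times$-orbit and each vertex has a well-defined stabilizer (the normalizer of the corresponding maximal order), one sees that $\Theta$ must fix the chosen chamber's vertices, hence (by affinity) the whole chamber, hence $\Theta=\mathrm{id}$.

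The main obstacle is the precise matching between the two simplicial structures in one apartment: one must verify that the walls $\alpha_i-\alpha_j\in\mathbb{Z}$ defining chambers in $\mathbb{R}^m/\mathbb{R}\cdot\mathbf{1}$ coincide exactly with the facets of $\Her{A}_\mathfrak{R}$ under $\Psi$, and that the barycentric coordinates produced by the length-of-level-sets recipe are the correct affine coordinates on the simplex $|\mathfrak{a}|$. Once this bookkeeping is set up, affinity, bijectivity, equivariance and uniqueness all fall out.
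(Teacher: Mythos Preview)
The paper does not supply its own proof of this proposition; it is quoted from \cite{broussousLemaire:02} (I.2.4 and II.1.1 with $E=F$) without argument. So there is nothing in the paper to compare your proof against. What the paper \emph{does} do, immediately after stating the proposition, is write down the explicit map $\tau:|\LC_{o_D}(V)|\to\Latt_{o_D}(V)$ sending $\sum_i\beta_i[\Gamma^i]$ to $\sum_i\beta_i[\Lambda^i]$ with $\Lambda^i(t)=\Gamma^i_0\pD^{[td]+}$; your map $\Phi$ is precisely the inverse of this $\tau$, so your existence argument is in line with the paper's intended identification and is essentially the standard one from \cite{broussousLemaire:02}.

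On uniqueness, your idea is right but the wording is loose. Saying that the vertex set is a single $A^\times$-orbit and that each vertex ``has a well-defined stabilizer'' does not yet pin down $\Theta$ on vertices: a priori an equivariant map could permute vertices within an orbit. The clean way to finish is: for any vertex $v$, equivariance forces $\mathrm{Stab}_{A^\times}(v)=\mathrm{Stab}_{A^\times}(\Theta(v))$; but $\mathrm{Stab}_{A^\times}(v)$ is the normaliser of a maximal order, hence a maximal compact-mod-center subgroup, and such a subgroup has $v$ as its \emph{unique} fixed point in ${\cal I}$ (Bruhat--Tits fixed point theorem together with maximality). Therefore $\Theta(v)=v$ for every vertex, and affinity gives $\Theta=\mathrm{id}$. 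With this adjustment your argument is complete.
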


The g.r. of ${\LC}_{o_D}(V)$ can be identified with $\Latt_{o_D}V$ in the following way. 
We put $$ [x]+:=\inf\{z\in\mathbb{Z}|\ x\leq z\},\ x\in\mathbb{R},$$
and we define a bijective map $$\tau:\ |{\LC}_{o_D}(V)|\ra \Latt_{o_D}(V)$$ as follows.
A convex barycenter $\sum \beta_i[\Gamma^i]$ with vertices $[\Gamma^i]$ of a chamber of ${\LC}_{o_D}(V)$ is mapped to $\sum_i\beta_i[\Lambda^i]$, where $\Lambda^i(t):=\Gamma^i_0\textfrak{p}_D^{[td]+}.$

\begin{proposition}[\cite{broussousLemaire:02} sec. I.3]
The bijection of proposition \ref{propaffinebij} induces an $A^\times$-equivariant isomorphism from $\Omega$ to the building $({\LC}_{o_D}(V),\textfrak{A}({\LC}_{o_D}(V)),\leq).$
\end{proposition}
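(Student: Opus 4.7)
The plan is to define $\Phi := \tau^{-1} \circ \varphi$, where $\varphi : \mathcal{I} \to \Latt_{o_D}(V)$ is the $A^\times$-equivariant affine bijection of Proposition~\ref{propaffinebij} and $\tau$ is the bijection just described. I will show that $\Phi$ is the geometric realisation of an $A^\times$-equivariant simplicial isomorphism $\Omega \to \LC_{o_D}(V)$ which matches apartments. The first task is to verify that $\tau$ is $A^\times$-equivariant and affine on every closed chamber: affinity holds by construction, since $\tau$ takes a simplicial barycenter to the corresponding barycenter of the explicit lattice functions $\Lambda^i(t) = \Gamma^i_0 \pD^{[td]+}$, and equivariance follows because this rule commutes with left multiplication by any $g \in A^\times$. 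Consequently $\Phi$ is an $A^\times$-equivariant affine bijection between the geometric realisations of two thick Euclidean buildings for $A^\times$.

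Next I would check that $\Phi$ sends faces of $\Omega$ onto faces of $\LC_{o_D}(V)$ and apartments to apartments. By $A^\times$-equivariance it suffices to do this in a single apartment. Fix a frame $\textfrak{R} = \{v_1 D, \ldots, v_m D\}$: the subset of $\Latt_{o_D}(V)$ split by $\textfrak{R}$ is parametrised by vectors $(\alpha_k) \in \mathbb{R}^m$ modulo constants via $\Lambda(s) = \bigoplus_k v_k \pD^{[(s+\alpha_k)d]+}$. Under $\varphi^{-1}$ this subset is the apartment $\Sigma_{\textfrak{R}}$ of $\Omega$, and under $\tau^{-1}$ it is $\LC_{o_D}(V)_{\textfrak{R}}$. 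Both carry the same affine-Coxeter-complex structure of type $\tilde{A}_{m-1}$, cut out by the hyperplanes $\alpha_k - \alpha_l \in \frac{1}{d}\mathbb{Z}$, so the simplicial decompositions agree and $\Phi$ restricts to a simplicial bijection $\Sigma_{\textfrak{R}} \to \LC_{o_D}(V)_{\textfrak{R}}$. Varying $\textfrak{R}$ and using $A^\times$-equivariance together with axiom B1 propagates this to the desired simplicial isomorphism, while preservation of the order $\leq$ on both sides reduces to the observation that $[\Gamma] \leq [\Gamma']$ holds precisely when the $\tau$-image of $|[\Gamma]|$ lies in the closure of the $\tau$-image of $|[\Gamma']|$.

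The main obstacle is the apartment-level identification of vertices: one must verify that $\varphi$ carries the vertices of $\Sigma_{\textfrak{R}}$ to the equivalence classes $[\Lambda]$ arising from period-one lattice chains, i.e.\ to the $\tau$-images of vertices of $\LC_{o_D}(V)_{\textfrak{R}}$. This relies on the fact that both vertex sets are characterised inside $A^\times$ as the $A^\times$-orbit of points with maximal parahoric stabiliser---the hereditary-order description of vertices for $\LC_{o_D}(V)$ and the standard Bruhat-Tits description for $\Omega$---together with the equivariance of $\varphi$ recorded in Proposition~\ref{propaffinebij}. Once this vertex correspondence is in hand, the combinatorial checks of the previous paragraph are routine.
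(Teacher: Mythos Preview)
The paper does not actually prove this proposition: it is quoted from \cite{broussousLemaire:02}, section~I.3, and stated without argument. There is therefore no in-paper proof to compare your attempt against.

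On its own merits your outline is sound, and the decisive idea sits in your final paragraph: an $A^\times$-equivariant bijection preserves pointwise stabilisers, and in both models the open face containing a point is determined by its $A^\times$-stabiliser (maximal parahorics single out vertices, the Iwahori singles out open chambers, and so on). That is exactly how Broussous and Lemaire argue. One point to tighten: in your second paragraph you assert that, in the $(\alpha_k)$-coordinates on $\Latt_{o_D}(V)_{\textfrak{R}}$, the pulled-back simplicial structure of the Bruhat--Tits apartment $\Sigma_{\textfrak{R}}$ is cut out by the \emph{same} hyperplanes $\alpha_k-\alpha_l\in\frac{1}{d}\mathbb{Z}$ as the lattice-chain structure. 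That is precisely the content of the proposition, so it cannot be invoked at that stage; it is your stabiliser argument that supplies it. Once you know $\varphi$ carries vertices bijectively to vertices and, more generally, matches faces via stabilisers, the affinity of $\varphi$ on each apartment forces the two $\tilde{A}_{m-1}$ Coxeter structures to coincide, and the rest of your second paragraph then goes through.
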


\begin{notation}
By the two propositions above we can identify $\Omega$ with \mbox{$(\Her{A},\textfrak{A}(\Her{A}),\supseteq)$} and ${\cal I}$ with $\Latt_{o_D}V$ and 
$\Latt^2_{o_F}(A).$
\end{notation}


\section{The map $j_E$}\label{secThemapjE}

\begin{notation}
For this section let $E|F$ be a field extension in $A$ and we set $B$ to be the centraliser of $E$ in $A,$ i.e.
$$B:=C_A(E):=\{a\in A|\ ab=ba\ \forall b\in B\}.$$
The building of $B$ we denote by $\Omega_E$ and its g.r. by ${\cal I}_E.$
\end{notation}
The next results are taken from \cite{broussousLemaire:02}.

\begin{theorem}\cite[Thm 1.1.]{broussousLemaire:02}
There exists a unique application $j_E:{\cal I}^{E^{\times}}\ra {\cal I}_E$ such that for any $x\in {\cal I}$ we have $j_E(\textfrak{a}(x))=B\cap\textfrak{a}(e(E|F)x).$
The map $j_E$ satisfies the following properties:
\be
\item it is a $B^\times$-equivariant bijection.
\item it is affine.
\ee
Moreover its inverse $j_E^{-1}$ is the only map ${\cal I}_E\ra {\cal I}$ such that 2. and 3. hold.
\end{theorem}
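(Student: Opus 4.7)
The plan is to work throughout with the identification of ${\cal I}$ with the space $\Latt^2_{o_F}(A)$ of square lattice functions in $A$, and similarly of ${\cal I}_E$ with $\Latt^2_{o_E}(B)$. First I would characterise the $E^{\times}$-fixed locus in ${\cal I}$: a class of square lattice functions is fixed by $E^{\times}$ precisely when it admits a representative $\textfrak{a}$ for which every $\textfrak{a}(t)$ is normalised by $E^{\times}$. After translating by a suitable real shift one may arrange $E\subseteq \textfrak{a}(0)$, so that each $\textfrak{a}(t)$ is in addition an $o_E$-bimodule. In particular $B\cap\textfrak{a}(t)$ makes sense as an $o_E$-lattice in $B$ for every $t$.

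Next I would verify that the recipe $j_E(\textfrak{a})(s):=B\cap\textfrak{a}(e(E|F)\,s)$ really produces an element of $\Latt^2_{o_E}(B)$. Monotonicity and left-continuity of the family are inherited from $\textfrak{a}$; the periodicity axiom on the $B$-side (a shift by $1/d_B$ under multiplication by $\pi_{D_B}$, where $D_B$ is the division algebra underlying $B$) is precisely what forces the rescaling factor $e(E|F)$, and is checked by comparing $\nu_F$ with the valuation on $E$ through $\nu_F(\pi_E)=1/e(E|F)$ up to units, together with the relation between $\pi_D$ and $\pi_{D_B}$. Well-definedness on equivalence classes is immediate, since a real translation of $\textfrak{a}$ corresponds to a proportional real translation of $j_E(\textfrak{a})$.

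The stated properties then follow one by one. $B^{\times}$-equivariance is direct from the identity $B\cap g\,\textfrak{a}(t)\,g^{-1}=g(B\cap\textfrak{a}(t))g^{-1}$ for $g\in B^{\times}$. Affineness I would check by choosing, for two given $E^{\times}$-fixed classes, a common frame of $V$ splitting both and compatible with the $B$-action; in such a frame the barycentric combination defined in the previous section commutes with the operation $\textfrak{a}\mapsto B\cap\textfrak{a}(e(E|F)\cdot)$. Bijectivity is the main obstacle: injectivity follows because an $E^{\times}$-stable square lattice function of $A$ is reconstructible from its trace on $B$ through any common splitting frame; surjectivity requires, starting from $\textfrak{b}\in\Latt^2_{o_E}(B)$, the construction of an $E^{\times}$-stable preimage $\textfrak{a}\in\Latt^2_{o_F}(A)$ that satisfies the stronger periodicity axiom of $A$ and simultaneously recovers $\textfrak{b}$. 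This is a descent from $B$ to $A$ using the structure of $V$ as a $D_B$-vector space together with the natural embedding $B\hookrightarrow A$.

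Finally, the uniqueness statements are pinned down by the closed formula on vertices together with affineness: two affine $B^{\times}$-equivariant maps that agree on a single vertex of each $B^{\times}$-orbit must coincide everywhere, because the $B^{\times}$-orbits together with the affine structure generate the full g.r.\ of each apartment. The same argument applied in the reverse direction yields uniqueness of $j_E^{-1}$ among affine $B^{\times}$-equivariant maps ${\cal I}_E\to {\cal I}$. The delicate point throughout is surjectivity in the construction of $j_E$, where one must carefully match periodicities between the two sides without losing the $E^{\times}$-stability of the extension.
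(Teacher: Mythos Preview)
The paper does not give its own proof of this theorem: it is quoted verbatim from \cite[Thm 1.1.]{broussousLemaire:02} and used as input, with no argument supplied. There is therefore nothing in the present paper to compare your proposal against.

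As an independent remark on your sketch: the overall strategy---working in the model $\Latt^2$ of square lattice functions, checking that $B\cap\textfrak{a}(e(E|F)\cdot)$ lands in $\Latt^2_{o_E}(B)$, and deducing equivariance, affineness and uniqueness from the explicit formula---is indeed the shape of the argument in Broussous--Lemaire. The genuinely delicate step is the one you flag yourself, namely bijectivity, and your description there is too thin to count as a proof. Saying that an $E^\times$-stable square lattice function is ``reconstructible from its trace on $B$ through any common splitting frame'' hides the real work: one has to produce, functorially and without choices, an inverse map at the level of lattice functions on $V$ (not just on $A$), and verify that the $o_D$-periodicity on the $A$-side matches the $o_{\Delta}$-periodicity on the $B$-side after the rescaling by $e(E|F)$. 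In the paper this is exactly what the subsequent cited theorem \cite[II 3.1.]{broussousLemaire:02} makes explicit in the unramified case via the decomposition $V=\bigoplus_k V^k$ and the formula $\Lambda(s)=\bigoplus_{k}\Theta(s-\tfrac{k}{d})\pi_D^k$; your ``descent'' paragraph would need to reach that level of concreteness to be a proof rather than a plan.
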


We briefly give Broussous and Lemaire's description of $j_E$ in terms of lattice functions but only in the case, where $E|F$ is isomorphic to a subextension $L_f|F$ of $L|F.$ Then $\tens{E}{F}{L}\cong\bigoplus_{k=0}^{f-1}L$ comming from the decomposition $1=\sum_{k=0}^{f-1}1^k$ labeled such that the
$\Gal(L|F)$-action on the second factor gives $\sigma(1^k)=1^{k-1}$ for $k\geq
1$ and $\sigma(1^0)=1^{f-1}$.
Applying it on the $\tens{E}{F}{L}$-module $V$, we get $V=\bigoplus_kV^k$, $V^k:=1^kV.$

\begin{remark}
\be
\item $B\cong\End{\Delta_E}{V^1}$ and
\item $B\cong M_{m}(\Delta_E)$ 
\ee
where $\Delta_E:=\Centr{D}{L_f}.$
\end{remark}

\begin{theorem}\cite[II 3.1.]{broussousLemaire:02}
In terms of lattice functions $j_E$ has the form
$j_E^{-1}([\Theta])=[\Lambda]$, with 
$$\Lambda(s):=\bigoplus_{k=0}^{f-1}\Theta(s-\frac{k}{d})\pi_D^k,\ s\in \mathbb{R}$$ 
where $\Theta$ is an $o_{\Delta}$-lattice function on $V^1.$
\end{theorem}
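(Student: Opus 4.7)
The plan is to invoke the characterization of $j_E$ from the preceding theorem. Since $E\cong L_f$ is unramified over $F$, $e(E|F)=1$, so the defining relation reduces to $j_E(\textfrak{a}(x))=B\cap\textfrak{a}(x)$. It suffices to verify that the prescribed $\Lambda$ is a well-defined $o_D$-lattice function with $[\Lambda]\in{\cal I}^{E^\times}$, and that $B\cap\textfrak{a}_\Lambda(t)=\textfrak{a}_\Theta(t)$ for every $t\in\mathbb{R}$. The last identity forces $j_E([\Lambda])=[\Theta]$, and bijectivity of $j_E$ then gives $j_E^{-1}([\Theta])=[\Lambda]$.

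First I would check that $\Lambda$ is a lattice function. Descent under inclusions and right-continuity pass termwise from $\Theta$ to $\Lambda$. The key verification is the periodicity $\Lambda(t)\pi_D=\Lambda(t+\frac{1}{d})$, which rests on the observation that $\pi_D^f$ lies in and is a uniformizer of $\Delta_E=C_D(L_f)$: the index of $\Delta_E$ over $L_f$ is $d/f$ and $L_f|F$ is unramified, so the normalized valuation of $\pi_D^f$ matches that of $\pi_\Delta$. Hence $\Theta(s)\pi_D^f=\Theta(s+\frac{f}{d})$. Re-indexing $k\mapsto k+1$ in $\Lambda(t)\pi_D=\bigoplus_k\Theta(t-\frac{k}{d})\pi_D^{k+1}$ sends the $k=f-1$ summand $\Theta(t-\frac{f-1}{d})\pi_D^f$ to $\Theta(t+\frac{1}{d})$, matching the $k=0$ summand of $\Lambda(t+\frac{1}{d})$, while the remaining summands align directly. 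Well-definedness on equivalence classes is then immediate, as shifting $\Theta$ by a constant shifts $\Lambda$ by the same constant.

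For $E^\times$-fixedness, the relation $\sigma(x)=\pi_D x\pi_D^{-1}$ together with $\iota_k=\sigma^k\iota$ implies $V^k\pi_D=V^{k-1\bmod f}$, on which $e\in E^\times$ acts by right multiplication by $\iota_{k-1}(e)$. Using $\pi_D^k\iota_{1-k}(e)=\iota_1(e)\pi_D^k$ one finds $e\Lambda(s)=\bigoplus_k\Theta(s-\frac{k}{d})\iota_1(e)\pi_D^k$; since $\iota_1(e)\in L^\times\subset\Delta_E^\times$ has integer $\Delta_E$-valuation (as $L_f|F$ is unramified), right multiplication by $\iota_1(e)$ translates $\Theta$ by an integer, so $e\Lambda$ is a translate of $\Lambda$ and $[\Lambda]\in{\cal I}^{E^\times}$.

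For the intersection identity, any $b\in B$ commutes with $E$ and with the right $D$-action, so $bV^k\subseteq V^k$ and $b(v\pi_D^k)=(bv)\pi_D^k$ for $v\in V^1$. Therefore $b\Lambda(s)\subseteq\Lambda(s+t)$ for all $s$ is equivalent to $b\Theta(s-\frac{k}{d})\subseteq\Theta(s+t-\frac{k}{d})$ for all $s$ and $k=0,\dots,f-1$, which is in turn equivalent to $b\in\textfrak{a}_\Theta(t)$. Hence $B\cap\textfrak{a}_\Lambda(t)=\textfrak{a}_\Theta(t)$, which completes the argument. The main obstacle is the combinatorial bookkeeping around the $E\otimes_F L$-decomposition of $V$; once the cyclic shift $V^k\pi_D=V^{k-1}$ and the identification of $\pi_D^f$ with a uniformizer of $\Delta_E$ are in hand, the remaining verifications are mechanical.
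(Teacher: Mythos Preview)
The paper does not give a proof of this theorem; it is quoted from \cite[II~3.1]{broussousLemaire:02} and used as a black box. So there is no argument in the present paper to compare your proposal against directly.

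That said, your strategy is correct and is exactly the one the paper itself employs later, in the proof of Example~\ref{exampleSimplification}, to re-derive the formula in the special case where $E$ is diagonally embedded and $W$ plays the r\^ole of $V^1$: build the candidate $o_D$-lattice function from $\Theta$ via the $\pi_D^k$-twists, check that its square lattice function intersected with $B$ recovers $\textfrak{a}_\Theta$, and invoke the defining property of $j_E$ (with $e(E|F)=1$) together with its injectivity. Your verification of periodicity via $\pi_D^f\in\Delta_E^\times$ being a uniformizer, and of the intersection identity via $b(v\pi_D^k)=(bv)\pi_D^k$ for $b\in B$, are the right computations.

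One small simplification: your $E^\times$-fixedness argument, with the undefined embeddings $\iota_k$ and the commutation $\pi_D^k\iota_{1-k}(e)=\iota_1(e)\pi_D^k$, is more elaborate than necessary. Since $e\in E\subset A$ acts $D$-linearly on $V$ from the left, one has immediately $e\cdot(\Theta(s-\tfrac{k}{d})\pi_D^k)=(e\,\Theta(s-\tfrac{k}{d}))\pi_D^k$; and because $E^\times$ is the centre of $B^\times$, it acts on ${\cal I}_E$ by translations of lattice functions, so $e\Theta(\cdot)=\Theta(\cdot+c)$ for some constant $c$, whence $e\Lambda(\cdot)=\Lambda(\cdot+c)$ and $[\Lambda]\in{\cal I}^{E^\times}$.
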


\section{The connection between embedding types and barycentric coordinates}\label{secidea}
In this sction we keep the notation from section \ref{secThemapjE}.
We repeat that $E_D$ denotes the biggest extension field of $E|F$ which can be embedded in $L|F.$ The centralizer of $E_D$ in $A$ is denoted by $B_D.$ We need a notion of  orientation on $\Omega_{E_D}$ to order the barycentric coordimnates of a point in ${\cal I}_{E_D}.$ 


\begin{definition}
An edge of $\Omega$ with vertices $e$ and $e'$ is {\it oriented towards} $e',$
if there are lattices $\Gamma\in\lattices{e}$ and $\Gamma'\in\lattices{e'},$ such that 
$\Gamma\supseteq\Gamma'$ with the quotient having $\kappa_D$-dimension 1, i.e. 
$\kappa_F$-dimension $d.$ We write $e\ra e'$ If $x$ is a point in ${\cal I}$ then there is a chamber $C\in\Omega$ such that $x$ lies in the closure of $|C|,$ i.e. in
$$\bigcup_{S\leq C}|S|.$$ The vertices of $C$ can be given in the way
$$e_1\ra e_2\ra\ldots\ra e_{m}\ra e_1.$$ 
If $(\mu_i)$ are the barycentric coordinates of $x$ with respect to $(e_i),$ i.e.  
$$x=\sum_i\mu_ie_i,$$
then the class $\langle \mu\rangle $ is called the {\it local type of} $x.$
\end{definition}
This definition applies for ${\cal I}_{E_D}$ as well. The skewfield is then $C_D(E)$ instead of $D$ and one has to substitute $d$ by $\frac{d}{[E_D:F]}.$ 

\begin{proposition}
The notion of local type does not depend on the choice of the chamber $C$ and the starting vertex $e_1.$
\end{proposition}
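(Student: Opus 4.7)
The plan is to show that both indeterminacies -- choice of starting vertex $e_1$ and choice of chamber $C$ -- dissolve once one realizes that the data actually depends only on the unique carrier face $S$ of $x$ (the minimal face whose geometric realization contains $x$) together with the intrinsic cyclic order on the vertices of $S$ coming from its lattice chain.

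First I would dispose of the dependence on $e_1.$ Fixing the chamber $C$ with cyclic order $e_1\to e_2\to\ldots\to e_m\to e_1$, any other admissible choice of starting vertex produces the cyclic sequence $e_k\to e_{k+1}\to\ldots\to e_m\to e_1\to\ldots\to e_{k-1}$, because the notion of orientation (the existence of $\Gamma\supseteq\Gamma'$ with quotient of $\kappa_D$-dimension one) depends only on the edge, and in a chamber the oriented cycle is unique: the lattice chain $\Gamma_0\supsetneq\Gamma_1\supsetneq\ldots\supsetneq\Gamma_{m-1}\supsetneq\Gamma_0\pi_D$ fixes a single direction of travel. The corresponding rearrangement of the barycentric coordinates is exactly the cyclic permutation $(\mu_k,\ldots,\mu_m,\mu_1,\ldots,\mu_{k-1})$, so $\langle\mu\rangle$ is unchanged by definition.

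Next I would address the dependence on $C$. Let $S$ be the carrier of $x$ (unique because a simplicial complex has a well-defined minimal face through any point), and let $C, C'$ be two chambers containing $S$. The positive coordinates of $x$ are supported precisely on the vertices of $S$; the remaining $\mu_i$ are zero. The vertices of $S$ inherit from its own lattice chain $\Gamma_{i_0}\supsetneq\Gamma_{i_1}\supsetneq\ldots\supsetneq\Gamma_{i_{r-1}}\supsetneq\Gamma_{i_0}\pi_D$ a cyclic order intrinsic to $S$, and the orientation on $C$ (resp.\ $C'$) refines it: the vertices of $C$ between two consecutive vertices of $S$ correspond to the insertion of intermediate lattices in the chain, and their number is exactly $n_j-1$, where $\langle n_1,\ldots,n_r\rangle$ is the invariant of $\textfrak{a}_S$ as defined in Section~2.2. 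Since this invariant depends only on $S$ and not on the refining chamber, the coordinate vector takes the form
\[
(\mu_1,\underbrace{0,\ldots,0}_{n_1-1},\mu_2,\underbrace{0,\ldots,0}_{n_2-1},\ldots,\mu_r,\underbrace{0,\ldots,0}_{n_r-1})
\]
up to a cyclic rotation depending on where $e_1$ is placed, for either choice of $C$. Hence the class $\langle\mu\rangle\in\MopRow(m,1)/{\sim}$ coincides for $C$ and $C'$.

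The main obstacle is the second step, and it is really a bookkeeping issue: one has to check that the cyclic order on vertices of $S$ induced from $C$ agrees with that induced from $C'$. This is where the intrinsic nature of the invariant of the hereditary order $\textfrak{a}_S$ (Theorem in Section~2.2) does the essential work, together with the observation that any chamber refining the lattice chain of $S$ respects the ambient period structure coming from $\pi_D$. Everything else is a transcription of the fact that vanishing coordinates label the refining vertices and that cyclic rotation of the coordinate sequence is exactly what $\langle\,\rangle$ quotients out.
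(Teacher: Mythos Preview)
The paper states this proposition without proof, treating it as a routine verification left to the reader. Your argument is correct and supplies precisely the justification one would give: the independence from $e_1$ is immediate from the definition of $\langle\,\rangle$, and the independence from $C$ follows because the nonzero barycentric coordinates of $x$, together with the lengths of the intervening blocks of zeros, are read off from the carrier face $S$ and its invariant $\langle n_1,\ldots,n_r\rangle$, neither of which depends on the refining chamber. Since there is no proof in the paper to compare against, your write-up stands on its own as a clean account of what the author left implicit.
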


For the definition of $\langle  \rangle ^c$ see section \ref{secVectorsMatrices}.

\begin{theorem}\label{thmconnection}
Let $(E,\textfrak{a})$ be an embedding of $A$ with embedding type $\langle \lambda\rangle $ and suppose $\textfrak{a}$ to have period $r.$ If $M_{\textfrak{a}}$ denotes the barycenter of $\textfrak{a}$ in ${\cal I}^{E}$ and $\langle \mu\rangle $ the local type of $j_{E_D}(M_{\textfrak{a}}),$ then the following holds.
\be
\item $r[E_D:F]\mu\in \mathbb{N}_0^m,$ and
\item $\langle v(\lambda)\rangle =\langle [E_D:F]r\mu\rangle ^c.$ 
\ee
\end{theorem}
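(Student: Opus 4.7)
The plan is to reduce to a pearl embedding, realise $\Theta:=j_{E_D}(M_{\textfrak{a}})$ as an explicit $o_{\Delta_E}$-lattice function on $V^1$, and read off the local type. By Theorem~\ref{thmPearlEmb} and the fact that the equivalence class of $(E,\textfrak{a})$ depends only on $E_D$, I may assume $(E,\textfrak{a})$ is the pearl embedding of the datum $\lambda$, so $E=E_D=L_f$, $\textfrak{a}=\textfrak{a}^{\bar{n}}$ is in standard form with $n_j=\sum_i\lambda_{i,j}$, $j_E=j_{E_D}$, and $\Delta_E=C_D(L_f)$ is the relevant skewfield. In the apartment of the fixed $D$-basis $\{v_1,\ldots,v_m\}$, averaging the $\alpha$-vectors of the $r$ vertices $[\Gamma_0],\ldots,[\Gamma_{r-1}]$ of $\textfrak{a}$ yields
$$M_{\textfrak{a}}(s)=\bigoplus_l v_l\,\pi_D^{\lceil(s+(r-j(l))/(rd))d\rceil}o_D,$$
where $j(l)\in\{1,\ldots,r\}$ is the column index of the sub-block of $v_l$.

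Writing $i(l)\in\{1,\ldots,f\}$ for the corresponding row index, the pearl formula reads $e\cdot v_l=v_l\sigma^{i(l)-1}(e)$ for $e\in E$; combined with $\pi_D\ell\pi_D^{-1}=\sigma(\ell)$ on $L$ this implies that $u_l:=v_l\pi_D^{i(l)}$ lies in $V^1$ and that $\{u_1,\ldots,u_m\}$ is a $\Delta_E$-basis of $V^1$. The formula of \cite[II~3.1]{broussousLemaire:02} for $j_E^{-1}$ then forces $\Theta(s)=M_{\textfrak{a}}(s)\cap V^1$; evaluating this intersection in each summand $v_lD$ and applying the identity $\lceil\lceil x\rceil/n\rceil=\lceil x/n\rceil$ gives
$$\Theta(s)=\bigoplus_l u_l\,\pi_{\Delta_E}^{\lceil(s+\alpha^{\Theta}_l)\,d/f\rceil}o_{\Delta_E},\qquad\alpha^{\Theta}_l=-\frac{(i(l)-1)r+j(l)}{rd},$$
so that all basis vectors in a given sub-block $(i,j)$ share the value $\alpha^{\Theta}=-p_{(i,j)}/(rd)$, where $p_{(i,j)}:=(i-1)r+j\in\{1,\ldots,fr\}$.

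Within one period $s\in(0,f/d]$ the function $\Theta$ therefore jumps exactly at $s_l:=p_l/(rd)$, $l=1,\ldots,k$, where $p_1<\cdots<p_k$ are the non-zero positions of $w:=v(\lambda)=\row(\lambda)\in\MopRow(fr,m)$, with $\kappa_{\Delta_E}$-dimensional drop $\lambda_{i_l,j_l}=w_{p_l}$ at the $l$-th jump. Hence $[\Theta]$ lies in the interior of the face of $\Omega_{E_D}$ with $k$ cyclically oriented vertices $[\Gamma_1],\ldots,[\Gamma_k]$; I extend this face to a chamber $C$ by inserting $w_{p_l}-1$ auxiliary vertices after each $[\Gamma_l]$, producing $m=\sum_l w_{p_l}$ cyclically ordered chamber vertices. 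The barycentre formula for lattice functions then pins down $\mu$: on each face vertex $[\Gamma_l]$ the weight equals $\mu_l^{\text{face}}=(s_l-s_{l-1})d/f=(p_l-p_{l-1})/(rf)$ with $p_0:=p_k-fr$, and zero on each auxiliary vertex (verified by solving $\alpha^{\Theta}=\sum_l\mu_l\alpha^{[\Gamma_l]}$ up to shift equivalence, using $\alpha^{[\Gamma_l]}_{(i,j)}=b^{\Gamma_l}_{(i,j)}f/d-s_l$). This proves $rf\mu\in\mathbb{N}_0^m$. Reading off pair representations, $\pairs(\langle rf\mu\rangle)=\langle(p_l-p_{l-1},\,w_{p_l})\rangle_l$, whereas $\pairs(\langle w\rangle)=\langle(w_{p_l},\,p_{l+1}-p_l)\rangle_l$ and its complement is $\langle(p_{l+1}-p_l,\,w_{p_{l+1}})\rangle_l$; these classes agree after one cyclic shift, giving $\langle v(\lambda)\rangle=\langle rf\mu\rangle^c$. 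The principal difficulty is this last combinatorial step---reconciling the length-$fr$ cyclic indexing of sub-blocks with the length-$m$ cyclic indexing of chamber vertices so that the complement operation precisely swaps jump gaps with dimensional drops.
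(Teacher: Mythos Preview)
Your argument is correct and takes a genuinely different route from the paper. Both proofs begin by reducing to the case $E=E_D\cong L_f$ and passing to a pearl embedding, but then diverge. You treat arbitrary period $r$ in one shot: you write the barycenter $M_{\textfrak{a}}$ explicitly as a lattice function in the apartment of the standard frame, pass to the $\Delta_E$-basis $u_l=v_l\pi_D^{i(l)}$ of $V^1$, intersect to obtain $\Theta$, and then read off jumps and barycentric weights directly, closing with the combinatorial identification of $\pairs(\langle rf\mu\rangle)$ with $\pairs(\langle\row(\lambda)\rangle^c)$.

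The paper instead only computes directly in the vertex case $r=1$ (and there it keeps $E$ diagonally embedded rather than in pearl form, conjugating by a diagonal $h$ of $\pi_D$-powers to reach the pearl embedding). For $r\geq 2$ it avoids your averaged lattice function entirely: it introduces an auxiliary central division algebra $D'$ over $F$ of index $rd$, together with an intermediate unramified extension $F'|F$ of degree $r$, so that the period-$r$ face $\textfrak{a}$ corresponds under a ``change of skewfield'' identification (Lemma~\ref{lemmaChangingSkewfield}) to a face whose barycenter is the $j_{F'}$-image of a \emph{vertex} of the building for $GL_m(D')$. The rank reduction lemma (Lemma~\ref{LemRankReduction}) then relates the embedding types across the tower $\Sigma'\to\Sigma'_{F'}\to\Sigma'_{E'}$ via $\row(\cdot)$, and Case~1 for the vertex finishes the argument.

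Your approach is more self-contained and avoids the auxiliary algebra $D'$ and the two supporting lemmas; the price is the slightly delicate bookkeeping at the end (length-$fr$ versus length-$m$ cyclic indexing), which you handle correctly. The paper's approach is more structural: it isolates the combinatorics into the separate rank reduction and skewfield-change lemmas, which may be of independent use, but at the cost of constructing the larger algebra $D'$.
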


\begin{remark}
With theorem \ref{thmconnection} we can calculate the embedding type from the local type. For example take $r=2,\ f=6,\ m=7$ and  assume that $j_{E_D}(M_{\textfrak{a}})$ is
$$\frac{3}{12}b_0+\frac{2}{12}b_1+\frac{1}{12}b_2+\frac{0}{12}b_3+\frac{0}{12}b_4+\frac{4}{12}b_5+\frac{2}{12}b_6.$$
and thus $$\langle 12\mu\rangle =\langle 3,2,1,0,0,4,2\rangle \equiv\langle (3,1),(2,1),(1,3),(4,1),(2,1)\rangle .$$
From the complement $$\langle 12\mu\rangle ^c\equiv \langle (1,2),(1,1),(3,4),(1,2),(1,3)\rangle \\ \equiv \langle 1,0,1,3,0,0,0,1,0,1,0,0\rangle $$ applying theorem \ref{thmconnection} we can deduce  the embedding type of $(E,\textfrak{a}):$ 
\begin{displaymath}
\left(\begin{array}{cc}1&0\\1&3\\0&0\\0&1\\0&1\\0&0\end{array}\right).
\end{displaymath}
\end{remark}

For the proof we can restrict to the case where $E=E_D$ and thus $B=B_D.$ We put $f:=[E:F],$ i.e.
$$E\cong L_f\subseteq L$$
and 
$$F\subseteq E\subseteq B\subseteq A.$$
 Firstly we need some lemmas. The actions of $G$ on square lattice functions by conjugation induces maps  
$$m_g:\ \Omega \ra \Omega,\ x\mapsto g.x $$
and 
$$c_g:\ {\cal I}_E\ra {\cal I}_{gEg^{-1}},\ y\in \Latt^2_{o_E}B\mapsto gyg^{-1}\in\Latt^2_{o_{gEg^{-1}}}gBg^{-1} $$
for $g\in G.$ 
\begin{lemma}\label{lemmaCommutativeDiagram}
$|m_g|$ and $c_g$ induce isomorphisms on the simplicial structures of the euclidean buildings, which preserve the orientation, i.e. an oriented  edge is mapped to an oriented edge such that the direction is preserved. In particular $|m_g|$ and $c_g$ are affine bijections, $m_g$ preserves the embedding type, $c_g$ the local type, and the following diagram is commutatve:
\begin{displaymath}
\xymatrix{
{\cal I}^{E^\times} \ar[r]^{ |m_g| } \ar[d]^{j_E} & {\cal I}^{(gEg^{-1})^\times} \ar[d]^{j_{gEg^{-1}}}\\
{\cal I}_{E} \ar[r]^{c_g} & {\cal I}_{gEg^{-1}}
}
\end{displaymath}
\end{lemma}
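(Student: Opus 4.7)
The plan is to treat the two maps separately, reduce everything to the fact that conjugation by $g\in A^{\times}$ is a $D$-linear ring automorphism of $A$, and then derive the commutativity of the diagram from the uniqueness clause of the Broussous--Lemaire theorem.

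First I would check that $m_g$ (and its induced map $|m_g|$) is a building isomorphism preserving orientation. Since conjugation by $g$ is an $F$-algebra automorphism of $A$, it sends hereditary orders to hereditary orders and reverses-inclusion-preserves the partial order; equivalently, on lattice functions it is a bijection $\Latt^2_{o_F}(A)\to\Latt^2_{o_F}(A)$. A frame $\textfrak{R}=\{v_iD\}$ is sent to the frame $g\textfrak{R}=\{gv_iD\}$, so apartments go to apartments, proving that $m_g$ is an automorphism of the simplicial building $(\Her A,\textfrak{A}(\Her A),\supseteq)$, and hence $|m_g|$ is an affine bijection of ${\cal I}$. For orientation: if $e\to e'$ is an oriented edge witnessed by lattices $\Gamma\supsetneq\Gamma'$ with $\dim_{\kappa_D}(\Gamma/\Gamma')=1$, then $g\Gamma\supsetneq g\Gamma'$ and the quotient remains of $\kappa_D$-dimension $1$ because left multiplication by $g$ is a $D$-linear isomorphism of $V$. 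Finally, $m_g$ preserves the embedding type because two embeddings $(E,\textfrak{a})$ and $(gEg^{-1},g\textfrak{a}g^{-1})$ are equivalent by definition.

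For $c_g$ the argument is analogous. Since $C_A(gEg^{-1})=gC_A(E)g^{-1}=gBg^{-1}$, conjugation by $g$ is an $F$-algebra isomorphism $B\iso gBg^{-1}$ which carries $\Latt^2_{o_E}(B)$ to $\Latt^2_{o_{gEg^{-1}}}(gBg^{-1})$, sends frames of the $C_D(E)$-space underlying $B$ (after the identification $B\cong\End{C_D(E)}{V^1}$) to frames of the corresponding space for $gBg^{-1}$, and sends apartments to apartments. Orientation is preserved by the same index argument, now applied to the residue skewfield $\kappa_{C_D(E)}$. Preservation of the local type then follows immediately from preservation of the simplicial structure together with orientation, since the local type is built only from barycentric coordinates read along an oriented cycle of vertices.

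The main point is the commutativity of the diagram, and here the natural obstacle is that one could try to chase lattice functions explicitly through the formula of \cite[II 3.1.]{broussousLemaire:02}. I would bypass this by invoking the uniqueness statement of the Broussous--Lemaire theorem. For any $x\in{\cal I}^{E^{\times}}$, using $e(gEg^{-1}|F)=e(E|F)$ and the defining property of $j_{gEg^{-1}}$,
\begin{align*}
 j_{gEg^{-1}}\bigl(|m_g|(\textfrak{a}(x))\bigr)
 &= j_{gEg^{-1}}\bigl(g\textfrak{a}(x)g^{-1}\bigr)
  = gBg^{-1}\cap g\,\textfrak{a}(e(E|F)x)\,g^{-1} \\
 &= g\bigl(B\cap\textfrak{a}(e(E|F)x)\bigr)g^{-1}
  = c_g\bigl(j_E(\textfrak{a}(x))\bigr).
\end{align*}
As $\textfrak{a}(x)$ runs over all hereditary orders of the form arising from points of ${\cal I}^{E^{\times}}$, this shows the diagram commutes on a dense set of points; since both $|m_g|\circ\text{(stuff)}$ and $c_g\circ j_E$ are affine and $B^{\times}$- resp.\ $gB g^{-1\times}$-equivariant, the identity extends by affinity to all of ${\cal I}^{E^{\times}}$. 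The hard part in this step is only bookkeeping: making sure that $|m_g|$ really lands in ${\cal I}^{(gEg^{-1})^{\times}}$, which is immediate because $x$ is fixed by $E^{\times}$ iff $g.x$ is fixed by $gE^{\times}g^{-1}$.
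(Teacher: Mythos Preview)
The paper states this lemma without proof and moves directly to the rank reduction lemma, so there is no strategy to compare against; your argument supplies the omitted details and is correct. The building-isomorphism and orientation claims indeed follow from $g$ acting $D$-linearly on $V$, the invariance of the embedding type under $m_g$ is literally the definition of equivalent embeddings, and the commutativity of the square follows from the defining formula $j_E(\textfrak{a}(x))=B\cap\textfrak{a}(e(E|F)x)$ together with $C_A(gEg^{-1})=gBg^{-1}$.

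Two small clean-ups. First, the density/affinity extension at the end is unnecessary: your displayed computation already holds for \emph{every} $x\in{\cal I}^{E^\times}$, because the Broussous--Lemaire formula is valid pointwise, so the diagram commutes outright. Second, although you announce that you will use the uniqueness clause of the Broussous--Lemaire theorem, you never actually do; the direct verification via the formula is what carries the argument, and that is the cleaner way to present it. The only tacit step in that verification is that the rescaling $x\mapsto e(E|F)x$ commutes with the $A^\times$-action on square lattice functions, which is immediate since conjugation by $g$ acts value-wise on $t\mapsto\textfrak{a}_\Lambda(t)$.
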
  

The following lemma gives a geometric interpretation of the map
$$\langle \trans{\row()}\rangle :\ \{\text{embedding types}\}\ra\{\text{embedding types of vertices}\} $$ 

\begin{lemma}[rank reduction lemma]\label{LemRankReduction}
Assume there is a field extension $F'|F$ of degree $s$ in $E|F,$ where $2\leq s\leq m.$ Let $\textfrak{a}$ be a vertex in $\Omega^{E^\times}$ such that $\textfrak{a}\cap C_A(F')$ is a face of rank $s$ in $\Omega^{E^\times}_{F'}$ and assume $(E,\textfrak{a})$ has embedding type $\langle \lambda\rangle $ and \mbox{$(E,\textfrak{a}\cap \Centr{F'}{A})$} has embedding type $\langle \lambda'\rangle .$ Then we get 
$$\row(\lambda)\sim\row(\lambda'),\text{ i.e. }\lambda\sim\trans{\row(\lambda')}.$$
\end{lemma}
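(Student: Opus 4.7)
The strategy is to normalize $(E,\textfrak{a})$ to a pearl embedding and then read off the embedding type of $(E,\textfrak{a}\cap C_A(F'))$ directly from the block structure of $A':=C_A(F')$.

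First, by Theorem~\ref{thmPearlEmb} and the equivariance asserted in Lemma~\ref{lemmaCommutativeDiagram}, I would replace $(E,\textfrak{a})$ by an equivalent pearl embedding. After this reduction one may assume $E=L_f\subseteq L$, that $E$ is diagonally embedded in $A=M_m(D)$ by the pearl recipe with datum $\lambda\in\Mint{f}{r}{m}$, that $\textfrak{a}$ is in standard form for the partition $m=\sum_j n_j$ with $n_j=\sum_i\lambda_{i,j}$, and that $F'=L_s\subseteq E$. Since conjugation by $A^\times$ commutes with taking centralizers, the embedding type of $(E,\textfrak{a}\cap C_A(F'))$ is preserved by this reduction as well.

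Second, I would determine $A'$ and $\textfrak{a}':=\textfrak{a}\cap A'$ explicitly. Setting $D':=C_D(F')$, a central division algebra of index $d/s$ over $F'$ containing $L$, the double-centralizer theorem together with a dimension count gives $A'\cong M_m(D')$. An entrywise computation on the block decomposition of $A$ refined by the pieces $V_{i,j}$ of sizes $\lambda_{i,j}$ shows that the $((i,j),(i',j'))$-block of $A'$ consists of matrices over $D'$ when $i\equiv i'\pmod s$, and of such matrices twisted by a power of $\pi_D$ depending on $i-i'$ otherwise. Intersecting block by block with $\textfrak{a}$ yields a concrete description of $\textfrak{a}'$; since $\pi_D^{\,s}$ is, up to a unit, a uniformizer of $D'$, one period of the lattice chain of $\textfrak{a}$ unfolds into $s$ consecutive steps of the lattice chain of $\textfrak{a}'$ in its natural simple $A'$-module.

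Third, I would identify $(E,\textfrak{a}')$ with a pearl embedding of $A'\cong M_m(D')$ and read off its datum $\lambda'\in\Mint{f/s}{rs}{m}$. Under this identification the rows of $\lambda'$, indexed by $\Gal(L|F')=\langle\sigma^s\rangle$, correspond to residue classes modulo $s$ of the rows of $\lambda$, while the $rs$ columns of $\lambda'$ correspond to pairs $(j,c)$ with $1\le j\le r$ and $0\le c\le s-1$. Tracing through the identification one checks that reading $\lambda$ row by row produces the same list of entries as reading $\lambda'$ row by row, up to a single cyclic shift; this is precisely $\row(\lambda)\sim\row(\lambda')$, equivalently $\lambda\sim\trans{\row(\lambda')}$.

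The main obstacle is the bookkeeping in the second and third steps: one must carry the $\pi_D^{\,i-i'}$-twists on the off-diagonal blocks of $A'$ through the conjugation that puts $\textfrak{a}'$ into standard form inside $A'\cong M_m(D')$, and translate the resulting reindexing of the $\lambda_{i,j}$ into the language of embedding data. Precisely this conjugation, together with the freedom of choosing a starting lattice in the chain of $\textfrak{a}$, is what forces the final comparison to hold only up to cyclic permutation rather than as a literal equality.
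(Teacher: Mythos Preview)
Your plan is correct and follows the same overall arc as the paper: reduce to a pearl embedding, describe the centralizer $A'=C_A(F')$ and the intersection $\textfrak{a}'=\textfrak{a}\cap A'$ explicitly, and read off the embedding datum $\lambda'$. The difference lies in how the ``bookkeeping obstacle'' you flag is handled. Rather than tracking the $\pi_D^{\,i-i'}$-twists on off-diagonal blocks and then conjugating $\textfrak{a}'$ into standard form inside $A'$, the paper first conjugates $(E_\lambda,\textfrak{a}_\lambda)$ by a permutation matrix that groups the rows of $\lambda$ according to their residue class $i\bmod s$, and then by a block-diagonal matrix of the shape $\diag(I_{n_1},\pi_D^{-1}I_{n_2},\ldots,\pi_D^{-(s-1)}I_{n_s})$. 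After this, $F'$ becomes literally the diagonal copy of $L_s$, so $A'=M_m(\Delta_{F'})$ with no twists at all, $\textfrak{a}'$ is already a hereditary order in standard form with invariant $\langle n_1,\ldots,n_s\rangle$, and the embedding datum of $(E,\textfrak{a}')$ can be read off directly as the $(f/s)\times s$ matrix whose $\row$ is the column $\lambda$. Your approach and the paper's are the same computation; the paper's extra conjugation simply removes the twists before you have to track them, which is what turns the ``obstacle'' into a one-line read-off. (The paper also writes out only the case $s=2$, remarking that general $s$ is similar.)
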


\begin{proof}
By lemma \ref{lemmaCommutativeDiagram} it is enough to show the result only for one embedding equivalent to $(E,\textfrak{a}).$ For simplicity we can restrict ourself to the case of $s=2.$ The argument for $s>2$ is similar. We fix a $D$-basis of $V.$ It is $(E,\textfrak{a})$ equivalent to the pearl embedding $(\lambda)=:(E_\lambda,\textfrak{a}_\lambda).$ Now we apply a permutation $p$ on $(\lambda)$ such that the odd exponents of $\sigma$ in $pE_\lambda p^{-1}$ are behind all even exponents, i.e. $pE_\lambda p^{-1}$ is the image of 
$$x\in L_{2f}\mapsto \diag(M_{n1}(x),M_{n2}(x)),\ n1:=\sum_{i\text{odd}}\lambda_i,
\ n2:=\sum_{i\text{even}}\lambda_i$$
where
$$M_{n1}(x)=\diag(\sigma^0(x)\EMatr{\lambda_{1}},\sigma^2(x)\EMatr{\lambda_{3}},\ldots,\sigma^{2f-2}(x)\EMatr{\lambda_{2f-1}})$$
and
$$M_{n2}(x)=\diag(\sigma^1(x)\EMatr{\lambda_{2}},\sigma^3(x)\EMatr{\lambda_{4}},\ldots,\sigma^{2f-1}(x)\EMatr{\lambda_{2f}}).$$
For the embedding $(E',\textfrak{a}')$ obtained by conjugating $p(\lambda) p^{-1}$ with the matrix 
$$\diag(\EMatr{n1},\pi_D^{-1}\EMatr{n2})$$ we have the following properties.
\bi
\item $F'$ is the image of the diagonal embedding of $L_2$ in $M_m(D)$ and its centraliser is $M_m(\Delta_{F'}),$ where $\Delta_{F'}:=\Centr{D}{L_2}$
\item The intersection of $\textfrak{a}'$ with $M_m(\Delta_{F'})$ is a herditary order in standard form with invariant $\langle n1,n2\rangle .$ The positivity of the integers follows from the assumption that this intersection is a face of rank $2.$
\ei
Since $\pi_{\Delta_{F'}}:=\pi_D^2$ is a prime element of $\Delta_{F'}$ which normalises $L$ and since the powers of $\sigma$ occuring in the description of $E'$ are even we can read the embedding type of $(E',\textfrak{a}'\cap M_m(\Delta_{F'}))$ directly. It is  
the class of
\begin{displaymath}
\left(\begin{array}{cc}
\lambda_1 & \lambda_2\\
\lambda_3 & \lambda_4\\
\vdots &\vdots \\
\lambda_{2f-1} & \lambda_{2f}\\
\end{array}\right).
\end{displaymath}
Thus the result follows.
\end{proof}

The next lemma shows that changing the skewfield does not change the embedding type.

\begin{lemma}[changing skewfield lemma]\label{lemmaChangingSkewfield}
Let $D'$ be a central skewfield over a local field $F'$ of index $d$ with a maximal unramified extension field $L'$ normalized by a prime element $\pi_{D'}$ and assume that $V'$ is an $m$ dimensional right vector space over $D'.$ Denote the euclidean building of $GL_m(D')$ by ${\cal I}'$ and let $\Sigma,\ \Sigma'$ be an apartment of ${\cal I},\ {\cal I}'$ corresponding to a basis $(v_i),\ (v'_i)$ respectively. Then $\Sigma'$ is fixed by the image $E'$ of the diagonal embedding of $L'_f$ in $\Matr{m}{D'}.$ Assume further that $E$ is the image of diagonal embedding of $L_f$ in $\Matr{m}{D}.$ Under these assumptions the map $\equiv$ from $|\Sigma|$ to $|\Sigma'|$ defined by
$$ [x\mapsto\bigoplus_iv_i\textfrak{p}_D^{[d(x+\alpha_i)]+}]\mapsto [x\mapsto\bigoplus_iv'_i\textfrak{p}_{D'}^{[d(x+\alpha_i)]+}] $$
is the g.r. from an isomorphism $\phi$ of simplicial complexes which preserves the orientation and the embedding type. The latter means that if $\textfrak{a}'$ is the image of a hereditary order $\textfrak{a}$ under $\phi$ then the embedding types of 
$(E,\textfrak{a})$ and $(E',\textfrak{a}')$ equal.
\end{lemma}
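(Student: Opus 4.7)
The plan is to verify each of the four claims — well-definedness, simplicial isomorphism, orientation preservation, and embedding type preservation — by showing that every relevant piece of structure is encoded combinatorially in the data $(d,m,f,\alpha)$ and therefore transfers unchanged across $\equiv$.

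First, $\equiv$ is well-defined on equivalence classes: translating $\alpha$ by a common real constant $c$ corresponds on both sides to the shift $x \mapsto x-c$ of the variable in the lattice function, which is precisely the equivalence relation, so $\equiv$ descends to a bijection $|\Sigma| \to |\Sigma'|$. Next, using Definition \ref{latticesquarelatticefunctions}, I would identify the face of $\Sigma$ whose relative interior contains a point with parameter $\alpha$: it is determined by the partition of $\{1,\ldots,m\}$ induced by the relation $\alpha_i \equiv \alpha_j \pmod{1/d}$, together with the cyclic ordering of these classes in $\mathbb{R}/\frac{1}{d}\mathbb{Z}$. Since this combinatorial datum is symmetric under the substitution $(D,L,\pi_D,v_i) \leftrightarrow (D',L',\pi_{D'},v'_i)$ (the index $d$ being shared), $\equiv$ restricts to an order-preserving bijection $\phi$ on the hereditary-order posets. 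An oriented edge arises by decreasing a single $\alpha_i$ by $1/d$, yielding a quotient lattice of $\kappa_D$-dimension one; since $D$ and $D'$ share the same index, the same combinatorial move on $\Sigma'$ produces a $\kappa_{D'}$-dimension-one quotient, so $\phi$ preserves orientation.

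For the embedding type, given $\textfrak{a} \in \Her{A}_\Sigma$, I would choose a permutation matrix $p$ so that $p\textfrak{a}p^{-1}$ is in standard form with invariant $(n_1,\ldots,n_r)$. Since the diagonal embedding $E$ consists of scalar matrices $\diag(x,\ldots,x)$ for $x \in L_f$, we have $pEp^{-1}=E$; hence $(E, p\textfrak{a}p^{-1})$ is already a pearl embedding, with embedding datum the $f \times r$ matrix whose first row is $(n_1,\ldots,n_r)$ and whose remaining entries are zero. By the naturality established above, the same $p$ puts $\textfrak{a}':=\phi(\textfrak{a})$ in standard form with identical invariant, and $pE'p^{-1}=E'$, so $(E', p\textfrak{a}'p^{-1})$ produces the same embedding datum. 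Hence the embedding types agree.

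The main obstacle I expect is the rigorous combinatorial identification of the apartment's face structure with the type of $\alpha$ modulo $1/d$ used in the second paragraph; once this is settled via the lattice-function/hereditary-order dictionary from Definition \ref{latticesquarelatticefunctions} and the period-equals-rank observation for faces of $(\Her{A},\supseteq)$, the remaining claims follow almost formally, since every invariant in sight depends only on $(d,m,f,\alpha)$.
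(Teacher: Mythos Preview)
Your treatment of well-definedness, the simplicial structure, and orientation is fine and matches the paper's ``easily verified'' remark. The gap is in the embedding-type argument.

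You assert that a permutation matrix $p$ alone brings $\textfrak{a}\in\Her{A}_\Sigma$ to standard form. This is false: a period-$1$ hereditary order in $\Sigma$ is the stabiliser of a lattice $\bigoplus_i v_i\textfrak{p}_D^{a_i}$, and if the $a_i$ are not all equal no permutation will turn it into $M_m(o_D)$. In general one needs both a permutation $P$ and a diagonal matrix $T=\diag(\pi_D^{k_1},\ldots,\pi_D^{k_m})$. But then $TET^{-1}\neq E$: since $\pi_D x\pi_D^{-1}=\sigma(x)$ for $x\in L$, conjugation by $T$ replaces the $i$-th diagonal entry $x$ by $\sigma^{k_i}(x)$. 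Hence $(TPEP^{-1}T^{-1},\textfrak{b})$ is a pearl embedding whose datum records the multiplicities of the various $\sigma$-powers, not the matrix with a single nonzero row. Your conclusion that the embedding datum is ``first row $(n_1,\ldots,n_r)$, rest zero'' would force every $(E,\textfrak{a})$ with diagonal $E$ to have the same embedding type, contradicting the very phenomenon the paper studies (cf.\ Case~1 in the proof of Theorem~\ref{thmconnection}).

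The paper's fix is exactly to carry the diagonal matrix along: write the lattice chains of $\textfrak{a}$ and $\textfrak{a}'$ with identical exponent vectors $\nu_{i,j}$, apply the same permutation $P$ and diagonal matrices $T,T'$ obtained from one another by substituting $\pi_{D'}$ for $\pi_D$. Then $TPEP^{-1}T^{-1}$ and $T'PE'P^{-1}T'^{-1}$ exhibit the \emph{same} pattern of $\sigma$-powers (because the exponents $k_i$ agree), so the two pearl embeddings have identical data. Conjugating back gives the claim. Your argument becomes correct once you replace ``permutation matrix $p$'' by ``$TP$ with $T$ diagonal in powers of $\pi_D$'' and track how $T$ twists the diagonal embedding.
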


\begin{proof}
We only show the preserving of the embedding type. The other properties are verified easely. If $\textfrak{L}$ is a lattice chain of 
$\textfrak{a}$ and $\textfrak{L}'$ one of $\textfrak{a}'$ then by $$\phi(\textfrak{a})=\textfrak{a}'$$ we can assume that for all indexes $j$ the lattices $\textfrak{L}_j$ and $\textfrak{L}'_j$ have the same exponent vectors, i.e. 
if 
$$\textfrak{L}_j=\bigoplus_iv_i\textfrak{p}_D^{\nu_{i,j}}$$ then
$$\textfrak{L}'_j=\bigoplus_iv'_i\textfrak{p}_{D'}^{\nu_{i,j}}$$
and thus by applying from the left a permutation matrix $P$ and a diagonal matrix $T,\ T',$ whose entries are powers of the corresponding prime element, we obtain simultanously lattice chains corresponding to hereditary orders $\textfrak{b},\ \textfrak{b}'$ in the same standard form.  More precisely $T'$ is obtained from $T$ if $\pi_D$ is substituted by $\pi_{D'}.$ 
Thus $(TPEP^{-1}T^{-1},\textfrak{b})$ and $(T'PE'P^{-1}T'^{-1},\textfrak{b}')$ have the same embedding type and thus by conjugating back $(E,\textfrak{a})$ and $(E',\textfrak{a}')$ have the same embedding type.
\end{proof}

We now fix a $D$-basis $v_1,\ldots,v_m$ of $V$ and therefore a frame 
$$\textfrak{R}:=\{R_i:=v_iD |\ 1\leq i\leq m\}$$
and an apartment $\Sigma=\Her{A}_{\textfrak{R}}$ of $\Omega.$
The algebra $A$ can be identified with $M_m(D).$
By the affine bijection $|\Sigma| \cong\mathbb{R}^{m-1} $
which maps 
$$ [\Lambda] \text{ with }\Lambda(x)=\bigoplus_i\textfrak{p}_D^{[d(x+\alpha_i)]+}$$
to
$$ d(\alpha_1-\alpha_2,\ldots,\alpha_{m-1}-\alpha_m), $$
we can introduce affine coordinates on $|\Sigma|$ where the points of $|\Sigma|$ corresponding to the vectors $0,\ (f,0,\ldots,0),\ (0,f,0,\ldots,0),\ldots,\ (0,\ldots,0,f)$ are denoted by $Q_1,Q_2,\ldots,Q_m.$ 

\begin{remark}
The vertices of $\Sigma$ are exactly the points of
$$ Q_1+\sum_{i=2}^m\frac{1}{f}\mathbb{Z}(Q_i-Q_1).$$
\end{remark}

\begin{remark}\label{remarkDiagonalEmbedding}
If $E\subseteq\cap_{i=1}^m\MopEnd_DR_i$ then $\Sigma\subseteq\Omega^{E^\times}$ and 
for an element $g\in \cap_{i=1}^m(\MopEnd_DR_i)^\times,$ i.e. a diagonal matrix,  $|m_g|$ induces an affine bijection of $|\Sigma |.$
If $g$ is $\diag(1,\ldots,1,\pi_D^k,1,\ldots,1),$ with $\pi_D^k$ in the i-th row, the map $|m_g|$ is of the form 
$$Q\mapsto Q+\frac{k}{f}(Q_{i+1}-Q_i),$$
where  we set $Q_{m+1}:=Q_1.$
\end{remark}

In the next example we consider the most important special case for $E,$ which is the only case that one has to consider for the proof of the theorem. 

\begin{example}\label{exampleSimplification}
Let us assume $E$ is the image of the diagonal embedding of $L_f$ in $M_m(D),$ i.e.
$$E=\{(x,\ldots,x)|\ x\in L_f\}.$$
Then $B$ and $j_E$ simplify, i.e.
\be
\item $B=\MopEnd_{\Delta}W$ with $\Delta:=\Centr{D}{L_f}$ and $W:=\bigoplus_iv_i\Delta$
\item In terms of lattice functions $j_E$ has the form
$$ j_E([\Lambda])=[\Lambda\cap W]$$
where $\Lambda\cap W$ denotes the lattice function 
$$ x\mapsto \Lambda(x)\cap W.$$
\item The image of $j_E|_{|\Sigma|}$ is the g.r. of the apartment $\Sigma_E$ which belongs to the frame $\{v_i\Delta|\ 1\leq i\leq m\}$ and in affine coordinates the map has the form
$$x\in\mathbb{R}^{m-1}\mapsto \frac{1}{f}x\in\mathbb{R}^{m-1}.$$
\item The vertices of $\Sigma_E$ are the points of $|\Sigma_E|$ with affine coordinate vectors in $\mathbb{Z}^{m-1}.$ Specifically the points $P_i:=j_E(Q_i)$ are vertices of a chamber of $\Sigma_E.$
\item  The edge from $P_i$ to $P_{i+1}$ is oriented to $P_{i+1}.$ 
\ee
\end{example}

\begin{proof}[example]
To prove the statements of the example it is enough to calculate $j_E$ in terms of lattice functions, i.e to show 2. The statements then follow by similar and standard calculations.\\
For 2: For an $o_D$-lattice function $\Lambda,$ whose class is $E^\times$-invariant there exists an $o_\Delta$-lattice
function $\Gamma,$ such that 
$$ j_E([\Lambda])=[\Gamma].$$
Using the decomposition 
$$ V=\tens{W}{\Delta}{D}=W\oplus W\pi_D\oplus W\pi_D^2\oplus\ldots\oplus W\pi_D^{f-1}, $$
the function 
$$\tilde{\Lambda}(x):=\bigoplus_{i=0}^{f-1}\Gamma(x-\frac{i}{f})\pi_D^{i}$$
is an $o_D$-lattice function of $V,$ such that
$$\textfrak{a}_{\Lambda}(x)\cap B=\textfrak{a}_{\tilde{\Lambda}}(x)\cap B.$$
Thus by the injectivity of $j_E$ the lattice functions $\Lambda$ and $\tilde{\Lambda}$ are equivalent and therefore 
$$ \Gamma=\tilde{\Lambda}\cap W\sim\Lambda\cap W .$$  
The appearence of $j_E$ in terms of coordinates follows now from
$$\textfrak{p}_D^{[x]+}\cap \Delta = \textfrak{p}_\Delta^{[\frac{[x]+}{f}]+}=\textfrak{p}_\Delta^{[\frac{x}{f}]+}.$$  
\end{proof}

\begin{proof}[theorem]
By lemma \ref{lemmaCommutativeDiagram} and by theorem \ref{thmPearlEmb}
we can assume that we are in the situation of the example \ref{exampleSimplification} above and that there is a diagonal matrix $h$ consisting of powers of $\pi_D$  with exponents in $\mathbb{N}_{f-1}\cup\{0\}$ such that 
$$(hEh^{-1},h\textfrak{a}h^{-1})$$
is the pearl embedding $(\lambda).$
We consider two cases for the proof.\\
{\bf Case 1:} $\textfrak{a}$ has period 1, i.e. 
$$h\textfrak{a}h^{-1}=M_m(o_D)=Q_1$$
and $\lambda$ is only one column.
We get $\textfrak{a}$ from $Q_1$ by applying $m_{h^{-1}}$ which is a composition of 
maps $m_g$ where $g$ differs from the identity matrix by only one diagonal entry $\pi_D^k.$ Now remark \ref{remarkDiagonalEmbedding} gives
$$\textfrak{a}=Q_1-\sum_{j=1}^{m}a_j(Q_{j+1}-Q_j)$$
where $a_j:=\frac{k-1}{f}$ if 
$$\sum_{i=1}^{k-1}\lambda_{i}<j\leq\sum_{i=1}^{k}\lambda_{i}.$$

Thus in barycentric coordinates $j_E(M_\textfrak{a})$ has the form
$$\frac{f-a_m+a_1}{f}P_1+\frac{a_2-a_1}{f}P_2+\ldots +\frac{a_{m}-a_{m-1}}{f}P_m.$$
and therefore the vector
$$\mu:=(\frac{f-a_m+a_1}{f},\frac{a_2-a_1}{f},\ldots,\frac{a_{m}-a_{m-1}}{f})$$
fullfils part one of the theorem.
If $(\lambda_{i_l})_{1\leq l\leq s}$ is the subsequence of non-zero entries we define the indexes 
$$j_l:=\lambda_1+\ldots+\lambda_{i_{l-1}}+1$$
and $j_1:=1.$
This are the indexes where the $\mu_j$ are non-zero, more precisely from 
$$j_l=\sum_{i=1}^{i_l-1}\lambda_i+1\leq \sum_{i=1}^{i_l}\lambda_i$$
we obtain for $a_j$ the following values:
$$a_j=a_{j_l}=i_l-1,\ j_l\leq j<j_{l+1}$$
and 
$$a_j=a_{j_s}=i_s-1,\ j_s\leq j\leq m,$$
and thus the subsequence of non-zero entries of $f\mu$ is 
$$(f\mu_{j_l})=(f-i_{s}+i_1,i_2-i_1,i_3-i_2,\ldots,i_{s}-i_{s-1}).$$
Therefore we get for $\pairs(\langle f\mu\rangle )$ the expression
$$\langle (1-i_{s}+i_1,\lambda_{i_1}),(i_2-i_1,\lambda_{i_2}),(i_3-i_2,\lambda_{i_3}),\ldots,(i_{s}-i_{s-1},\lambda_{i_s})\rangle $$
and this is precisely $\langle \row(\lambda)\rangle ^c.$\\
{\bf Case 2:} Assume the period $r$ of $\textfrak{a}$ is not 1. Here we want to use rank reduction. We fix an unramified field extension $L'|F$ of degree $r*d$ in an algebraic closure of $F.$
Denote by $D'$ a skewfield which is a central cyclic algebra over $F$ with maximal field $L'$ and an $L'$-normalising prime element $\pi_{D'},$ i.e.
$$D'=\bigoplus_{i=0}^{d*r-1}L'\pi_{D'}^i,$$ $$\pi_{D'}L'\pi_{D'}^{-1}=L',\ \text{and } \pi_{D'}^{d+r}=\pi_F.$$
The images of $L'_r,$ $L'_{r*f}$ under the diagonal embedding of $L'$ in $\Matr{m}{o_{D'}}$ are denoted by $F',$ $E'$ respectively and the apartment of the euclidean building ${\cal I}'$ of $GL_m(D')$ corresponding to the standart basis is denoted by $\Sigma',$ i.e.
we have a field tower
$$ E'\supseteq F'\supseteq F$$
and apartments $\Sigma',\ \Sigma'_{E'},\ \Sigma'_{F'}$
in the buildings ${\cal I}',\ {\cal I}'_{E'},\ {\cal I}'_{F'}$ respectively.
We then obtain a commutative diagram of bijections, where the lines are induced by isomorphisms of chamber complexes which preserve the orientation. 
\begin{displaymath}
\xymatrix{
|\Sigma| \ar[r]^{{\equiv}_F} \ar[d]^{j_E} & |\Sigma'_{F'}| \ar[d]^{j_{E'}} \\ |\Sigma_E|\ar[r]^{{\equiv}_E} & |\Sigma'_{E'}|
}
\end{displaymath}
The map ${\equiv}_F$ is given by
$$[x\mapsto \bigoplus_{i=0}^{m-1}v_i\textfrak{p}_D^{[d(x+\alpha_i)]+}]\mapsto [x\mapsto \bigoplus_{i=0}^{m-1}e_i\textfrak{p}_{\Centr{D'_r}{L'}}^{[d(x+\alpha_i)]+}]$$
and ${\equiv}_E$ analogously. Because of lemma \ref{lemmaChangingSkewfield} the map ${\equiv}_F$ preserves the embedding type and thus 
we can finish the proof by applying lemma \ref{LemRankReduction} on
$$\Sigma'\ra\Sigma'_{F'}\ra\Sigma'_{E'}.$$
More precisely, let $S_r$ be a face of rank $r$ in $\Sigma'_{F'}.$ Its barycenter has affine coordinates in $\frac{1}{r}\mathbb{Z}^{m-1}$ and therefore the preimage of it under $j_{F'}$ is a point $S_1$ with integer affine coeffitients, i.e. it corresponds to a vertex of ${\cal I}'.$ Because of 
$$j_{E'}(M_{S_r})=j_{E'}(j_{F'}(S_1))=j_{E'}(S_1)$$
the theorem follows now from the rank reduction lemma and case 1.
\end{proof}
\bibliographystyle{alpha}
\renewcommand\refname{Literature}
\bibliography{Bibliography}

\begin{thebibliography}{Bro89}

\bibitem[BF83]{bushnellFroehlich:83}
C.J. Bushnell and A.~Fr\"ohlich.
\newblock {\em Gauss sums and p-adic division algebras}.
\newblock Springer Verlag Berlin, 1983.

\bibitem[BG00]{broussousGrabitz:00}
P.~Broussous and M.~Grabitz.
\newblock Pure elements and intertwining classes of simple strata in local
  central simple algebras.
\newblock {\em COMMUNICATION IN ALGEBRA}, 28(11):5405--5442, 2000.

\bibitem[BL02]{broussousLemaire:02}
P.~Broussous and B.~Lemaire.
\newblock Buildings of {$GL(m,D)$} and centralizers.
\newblock {\em Transformation Groups}, 7(1):15--50, 2002.

\bibitem[Bro89]{brown:89}
K.~Brown.
\newblock {\em Buildings}.
\newblock Springer-Verlag New York Berlin Heidelberg, 1989.

\bibitem[BT84]{bruhatTitsIII:84}
F.~Bruhat and J.~Tits.
\newblock Sch\'emas en groupes et immeubles des groupes classiques sur un corps
  local.
\newblock {\em Bulletin de la Soci\'et\'e math\'ematique de France},
  112:259--301, 1984.

\bibitem[Rei03]{reiner:03}
I.~Reiner.
\newblock {\em Maximal {Orders}}.
\newblock Oxford University Press, 2003.

\bibitem[Sko05]{skodlerack:05}
D.~Skodlerack.
\newblock Beschreibung affiner {Geb\"aude} und der \"{U}bergang zu
  {Z}entralisatoren.
\newblock {\em Diplomarbeit {HU-Berlin}}, 2005.

\end{thebibliography}
\end{document}